\newtheorem{thm}{Theorem}[section]
\newtheorem{rem}[thm]{\it Remark}
\newtheorem{lem}[thm]{Lemma}
\newtheorem{exa}[thm]{Example}
\def\pf{\noindent{\it Proof.} }
\def\qed{\nopagebreak\hfill{\rule{4pt}{7pt}}
\medbreak}
\numberwithin{equation}{section}
\def\qed{\nopagebreak\hfill{\rule{4pt}{7pt}}
\medbreak}
\newlength{\boxedparwidth}
\hline \end{tabular} \end{center}}
\begin{document}

\begin{center}

 {\large \bf 
Gap between the largest and smallest parts of partitions and Berkovich and Uncu's conjectures }
\end{center}

\begin{center}
{Wenston J.T. Zang}$^{1}$ and
  {Jiang Zeng}$^{2}$ \vskip 2mm
$^{1}$Institute for  Advanced Study in Mathematics\\[2pt]
   Harbin Institute of Technology, Heilongjiang 150001, P.R. China\\[8pt]

   \vskip 2mm

 $^{2}$Univ Lyon, Universit\'e Claude Bernard Lyon 1, CNRS UMR 5208,
 Institut Camille \\[2pt]
 Jordan, 43 blvd. du 11 novembre 1918, F-69622 Villeurbanne cedex,
 France\\[5pt]
  $^1$zang@hit.edu.cn, \quad $^2$zeng@math.univ-lyon1.fr
\end{center}
\vskip 6mm \noindent {\bf Abstract.}
We prove three main  conjectures of Berkovich and Uncu  (Ann. Comb. 23 (2019) 263--284) on the inequalities
between  the numbers of  partitions of $n$ with bounded gap between largest and smallest parts for sufficiently large $n$.  Actually our theorems are stronger than their original conjectures.
The analytic version of
our results shows that the coefficients of
 some  partition $q$-series are eventually positive.

\noindent {\bf Keywords}: Partition  inequalities, Frobeinus coin problem, Non-negative $q$-series expansions, Injective maps.

\noindent {\bf AMS Classifications}: 05A17, 05A20, 11P81.

\section{Introduction}
Let $n$ be a positive integer, a \emph{partition} of $n$ is a nonincreasing finite sequence of positive integers $\lambda_1, \lambda_2, \ldots , \lambda_k$ whose sum is $n$. Each $\lambda_i$
is called a \emph{part}
of the partition.
In \cite{Ber-Unc-19}, Berkovich and Uncu proved various inequalities   between
the numbers of partitions with the  bound on
the  largest part and some restrictions on occurrences of parts, and also make several conjectures. To be specific, we introduce
the following definitions:
\begin{enumerate}
\item Let ${\mathcal C}_{L,s,2}(n)$ (resp. $c_{L,s,2}(n)$)
be the set (resp. number) of partitions of $n$ with parts
in the domain $\{s+1, \ldots, s+L\}$.
\item Let ${\mathcal F}_{L,s,k}(n)$ (resp. $f_{L,s,k}(n)$) denote the set (resp. number) of partitions of $n$
 with  the smallest part $s$, the largest part at most $L+s$,
 and no part equal to $k$.
\end{enumerate}

In this paper, motivated by the open problems Conjecture 3.2, Conjecture 3.3 and Conjecture 7.1 in \cite{Ber-Unc-19}, we shall prove the following two main theorems.
\begin{thm}\label{thm-main}
For integer $s\geq 1$, $L\geq 3$ and $s+L\geq k\geq \max\{s+1,L\}$, there exists an integer $M$ which only depends on $s$ such that for $n\ge M$,
\begin{equation}
f_{L,s,k}(n)\geq c_{L,s,2}(n).
\end{equation}
\end{thm}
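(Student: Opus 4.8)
\emph{Reduction to an injection.} First I would pass to generating functions to pin down exactly what has to be proved. A partition counted by $c_{L,s,2}(n)$ has all parts in $\{s+1,\dots,s+L\}$, and one counted by $f_{L,s,k}(n)$ has all parts in $\{s,\dots,s+L\}\setminus\{k\}$ with at least one part equal to $s$; since $k\in\{s+1,\dots,s+L\}$, the standard geometric series expansions give
$$\sum_{n\ge0}c_{L,s,2}(n)q^n=\prod_{j=s+1}^{s+L}\frac{1}{1-q^j},\qquad \sum_{n\ge0}f_{L,s,k}(n)q^n=\frac{q^s(1-q^k)}{(1-q^s)\prod_{j=s+1}^{s+L}(1-q^j)},$$
hence
$$\sum_{n\ge0}\bigl(f_{L,s,k}(n)-c_{L,s,2}(n)\bigr)q^n=\frac{2q^s-1-q^{s+k}}{(1-q^s)\prod_{j=s+1}^{s+L}(1-q^j)}.$$
Let $d(m)$ be the number of partitions of $m$ into parts from $\{s,s+1,\dots,s+L\}$, so that $\sum_m d(m)q^m=\bigl((1-q^s)\prod_{j=s+1}^{s+L}(1-q^j)\bigr)^{-1}$. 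Deleting a part $s$ (respectively a part $k$), which is legal since $s,k\in\{s,\dots,s+L\}$, shows $d(m)-d(m-s)=P_1(m)$ and $d(m)-d(m-k)=P_2(m)$, where $P_1(m)$ counts partitions of $m$ with parts in $\{s+1,\dots,s+L\}$ and $P_2(m)$ counts partitions of $m$ with parts in $\{s,\dots,s+L\}\setminus\{k\}$. Reading off the coefficient of $q^n$, $f_{L,s,k}(n)-c_{L,s,2}(n)=2d(n-s)-d(n)-d(n-s-k)=P_2(n-s)-P_1(n)$, so the theorem is \emph{equivalent} to the existence, for $n\ge M$, of an injection
$$\{\text{partitions of }n\text{ with parts in }\{s+1,\dots,s+L\}\}\ \hookrightarrow\ \{\text{partitions of }n-s\text{ with parts in }\{s,\dots,s+L\}\setminus\{k\}\};$$
this is also the analytic reformulation alluded to in the abstract.

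\emph{The injection by surgery.} Equivalently (add back a part $s$), one wants to send a partition $\lambda$ with parts in $\{s+1,\dots,s+L\}$ to one with parts in $\{s,\dots,s+L\}\setminus\{k\}$ of the \emph{same} weight and with at least one part $s$. I would build this by a small case analysis. If $k\ge 2s$: when $\lambda$ has a part $k$, split every part $k$ into the two legal parts $s$ and $k-s$ (neither equals $k$), which simultaneously destroys all forbidden parts and supplies the mandatory part $s$; when $\lambda$ has no part $k$, perform the analogous split on the largest part $\lambda_1$ (with a three-way split $s+L\mapsto\{s,s,L-s\}$ in the sole boundary situation $k=L$, $\lambda_1=s+L$), then delete one copy of $s$ to restore the weight. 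If $k<2s$ a part $k$ cannot be broken into legal parts including $s$, so one instead first \emph{merges} a part $k$ (or a part $s+1$, in the extreme case $k=s+1$) with a suitable neighbouring part of $\lambda$ and then breaks one copy of $s$ off the merged block; a short list of boundary splittings covers the cases where the merged value would leave the window $\{s,\dots,s+L\}$, and the few $\lambda$ with too few parts to merge at all are set aside. In every branch one checks that the output is a legal partition of the correct weight.

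\emph{Injectivity and a uniform threshold --- the hard part.} Two things must be controlled, and the second is where the real difficulty lies. (i) \emph{Injectivity}: the branch-dependent surgeries must have pairwise disjoint images; I would enforce this by always acting on a canonically distinguished part (the smallest forbidden part $k$, failing that the largest part) and by recording the minimal local data --- the number of parts that equalled $k$, or the value of $\lambda_1$ --- needed to invert the map inside its branch, the branches being recognizable from the output. (ii) \emph{The threshold $M=M(s)$}: the surgeries need ``room'', and they fail precisely for a family of extremal $\lambda$, namely those within $O(1)$ parts of $\bigl((s+L)^{\lfloor n/(s+L)\rfloor}\bigr)$ or of $\bigl((s+1)^{\lfloor n/(s+1)\rfloor}\bigr)$, together with those having very few parts. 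The decisive observation is that, once $s$ is fixed, the number of exceptional partitions of $n$ is bounded by a constant depending on $s$ alone, \emph{uniformly in $L$ and in $k$}, because each extremal shape is pinned down by $O_s(1)$ parameters; meanwhile $P_2(n-s)$ grows like a positive constant times $n^{L-1}$ --- consistently with the fact that $2q^s-1-q^{s+k}$ vanishes to order exactly $1$ at $q=1$ while the denominator has a pole of order $L+1$, so that $P_2(n-s)-P_1(n)$ has leading term $\asymp(k-s)\binom{n+L-1}{L-1}>0$. Hence for $n$ beyond a bound expressible through $s$ only, the image of the bulk surgery omits far more than enough partitions of $n-s$ to absorb the bounded exceptional family, which closes the construction. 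Making this comparison quantitative --- turning ``enough'' into an explicit $M(s)$ that does not see $L$ or $k$ --- is the step I expect to be most delicate, and it is exactly here that the three-term shape of the numerator $2q^s-1-q^{s+k}$ is what makes uniformity in $L$ and $k$ possible.
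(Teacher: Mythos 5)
Your reduction to the inequality $P_2(n-s)\ge P_1(n)$ is correct and matches the paper's starting point ($f_{L,s,k}(n)=p_{A\setminus\{k\}}(n-s)$, $c_{L,s,2}(n)=p_{A\setminus\{s\}}(n)$ with $A=\{s,\dots,s+L\}$), but the part you yourself flag as ``the hard part'' --- a threshold $M$ depending on $s$ alone, uniformly over all $L\ge 3$ and all admissible $k$ --- is exactly where the argument breaks, and your proposed mechanism for it does not work. You want to absorb the exceptional partitions (where the surgery fails) into the slack of the bulk injection by noting that $P_2(n-s)-P_1(n)\asymp (k-s)\binom{n+L-1}{L-1}$. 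But that asymptotic is a statement for $n\to\infty$ with $L$ fixed: the leading constant is $\frac{k-s}{(L-1)!\,s(s+1)\cdots(s+L)}$, which tends to $0$ extremely fast as $L$ grows, and the value of $n$ at which the leading term starts to dominate depends on $L$. So no comparison of the form ``exceptional count $\le C(s)$ versus slack $\ge c_L n^{L-1}$'' can produce an $L$-independent threshold. Moreover the claim that the exceptional family has size bounded by a constant depending on $s$ alone is unsubstantiated and doubtful as stated: partitions of $n$ with $O(1)$ parts drawn from $\{s+1,\dots,s+L\}$ number on the order of a power of $L$, not of $s$. Finally, the surgery itself has unverified branches: splitting $\lambda_1$ into $s$ and $\lambda_1-s$ can produce a part equal to $k$ (possible since $k\le L$) or a part smaller than $s$; in the regime $k<2s$ (forced only when $L<2s$) merging a part $k\ge L$ with any other part $\ge s+1$ \emph{always} overflows the window $\{s,\dots,s+L\}$, so the ``short list of boundary splittings'' is in fact the whole case; and injectivity across the branch ``$\lambda$ has a part $k$'' versus ``$\lambda$ has no part $k$'' is asserted but not established (both branches can output a partition with exactly one part $s$).

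The paper resolves the uniformity problem by a two-regime split that your proposal is missing. For $3\le L\le 2s^3+5s^2$ there are, for fixed $s$, only finitely many pairs $(L,k)$, so one may use the Frobenius--Schur asymptotic $f_{L,s,k}(n)-c_{L,s,2}(n)\sim\frac{(k-s)(n-s)^L}{L!\,s(s+1)\cdots(s+L)}$ to get a threshold $M_{L,s}$ for each and then take the maximum; the $L$-dependence of each threshold is harmless because only finitely many values of $L$ occur. For $L\ge 2s^3+5s^2+1$ (hence $k\ge L$ and $r=\lfloor k/s\rfloor\ge 2s^2+5s$) the paper constructs an explicit injection $C_{L,s,2}(n)\to F_{L,s,k}(n)$ with \emph{no} exceptional set, by partitioning $C_{L,s,2}(n)$ into five classes according to the multiplicities $f_k$ and $f_{as}$ and designing a separate weight-preserving surgery for each, with the images kept disjoint by the value of $g_s$ and the pattern of multiples of $s$; the only place a lower bound on $n$ enters is the purely arithmetic estimate $n>2s^5+8s^4+s^3-14s^2+3s$ guaranteeing that a partition in the third class has a part exceeding $2s^2+5s$. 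If you want to salvage your approach, you need either this kind of finite-case reduction for small $L$ or an exception-free injection for large $L$; an asymptotic count of exceptions cannot be made uniform in $L$.
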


\begin{rem}
Berkovich and Uncu~\cite[Theorems 1.1 and  3.1]{Ber-Unc-19}
proved Theorem~\ref{thm-main}  for  $s=1$ (resp. $s=2$), $k=L$ (resp. $k=L+1$)  with   $M=1$ (resp. $M=10$).
They also conjectured
the cases $k=s+L-1$ and $k=L$ of Theorem~\ref{thm-main}~~\cite[Conjectures 3.2 and  3.3]{Ber-Unc-19}.
\end{rem}

By  the elementary theory of partitions~\cite[Chapters 1--3]{And-1998} it is not difficult to see that
the generating functions of  $c_{L,s,2}(n)$'s and $f_{L,s,1}(n)$'s  read as follows:
\begin{align}
\sum_{n=0}^\infty c_{L,s,2}(n) q^n&=\frac{1}{(q^{s+1};q)_{L}},\label{gf1}\\
\sum_{n=1}^\infty f_{L,s,k}(n)q^n&=\frac{q^s(1-q^k)}{(q^s;q)_{L+1}}.\label{gf2}
\end{align}
Here, we use the standard  $q$-notation~\cite{And-1998}:
\begin{align*}
(a;q)_n=(1-a)(1-aq)\ldots (1-aq^{n-1}).
\end{align*}

Recall that  a series $\sum_{n\geq 0}a_nq^n\in {\mathbb R}[[q]]$ is called
 {\it eventually positive} if there exists an integer $M\geq 0$ such that $a_n>0$ for all $n>M$.
For instance, Theorem~\ref{thm-main} and \eqref{gf1} and \eqref{gf2}  imply that
 the $q$-series
\[ \frac{q^s(1-q^k)-(1-q^s)}{(q^s;q)_{L+1}}\] is eventually positive.
In general, we derive the following theorem.

\begin{thm}\label{thm-main0-2}
For integers $L\ge 3$, $s\geq  1$, $r\geq 0$ and $k_1>k_2\geq 1$, the series
\begin{align}\label{def:H}
H^*_{L,s,r,k_1,k_2}(q): =\frac{q^r(1-q^{k_1})-(1-q^{k_2})}{(q^s;q)_{L+1}}
\end{align}
 is eventually positive.
\end{thm}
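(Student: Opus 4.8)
The plan is to analyse the coefficients of the rational function
$H^*_{L,s,r,k_1,k_2}(q)=N(q)/(q^s;q)_{L+1}$, where $N(q)=q^r(1-q^{k_1})-(1-q^{k_2})=q^r-q^{r+k_1}-1+q^{k_2}$, by a standard partial-fraction/singularity analysis. The first observation is that $N(1)=0$ while $N'(1)=r-(r+k_1)+k_2=k_2-k_1<0$; hence $N(q)=(1-q)\widetilde N(q)$ for a polynomial $\widetilde N$ with $\widetilde N(1)=k_1-k_2>0$. Consequently $q=1$ is a pole of $H^*$ of order exactly $(L+1)-1=L$, and near $q=1$, using $1-q^j=(1-q)[j]_q$ with $[j]_q\to j$, one gets the leading behaviour
\[
H^*_{L,s,r,k_1,k_2}(q)\ \sim\ \frac{(k_1-k_2)\,(s-1)!}{(s+L)!}\,\frac{1}{(1-q)^{L}}\qquad(q\to 1).
\]

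Next I would check that $q=1$ is the \emph{unique} pole of maximal order. Since $N$ is a polynomial, the order of any root of unity $\zeta$ as a pole of $H^*$ is at most its order as a pole of $1/(q^s;q)_{L+1}$. If $\zeta$ is a primitive $d$-th root of unity with $d\ge 2$, that order equals the number of $j\in\{s,s+1,\dots,s+L\}$ divisible by $d$; among these $L+1$ consecutive integers this count is at most $\lceil (L+1)/d\rceil\le\lceil (L+1)/2\rceil$, and since $L\ge 3$ we have $\lceil(L+1)/2\rceil\le L-1<L$. Thus every pole other than $q=1$ has order at most $L-1$. (This is precisely where the hypothesis $L\ge3$ is used; for $L=2$ the root $q=-1$ could tie the order $L$.)

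Finally, decompose $H^*_{L,s,r,k_1,k_2}(q)=P(q)+\sum_{\zeta}\sum_{i=1}^{m_\zeta}A_{\zeta,i}(1-q/\zeta)^{-i}$ with $P$ a polynomial, $m_1=L$, and $m_\zeta\le L-1$ for $\zeta\ne1$. Extracting coefficients gives, for all large $n$,
\[
[q^n]\,H^*_{L,s,r,k_1,k_2}(q)=Q_1(n)+\sum_{\zeta\ne1}\zeta^{-n}Q_\zeta(n),
\]
where $\deg Q_1=L-1$ and $\deg Q_\zeta\le L-2$. The asymptotics above identify the leading coefficient of $Q_1$ as
\[
\frac{(k_1-k_2)\,(s-1)!}{(s+L)!\,(L-1)!}>0,
\]
while the remaining terms are $O(n^{L-2})$; hence $[q^n]H^*_{L,s,r,k_1,k_2}(q)>0$ for $n$ sufficiently large, which is the claim. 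Equivalently, and a touch more concretely, $[q^n]H^*=p(n-r)-p(n-r-k_1)-p(n)+p(n-k_2)$ where $p(m)$ counts partitions of $m$ into parts from $\{s,\dots,s+L\}$ (and $p(m)=0$ for $m<0$); one then invokes the standard fact that for large $m$, $p(m)$ agrees with a quasi-polynomial of degree $L$ whose two leading coefficients are constants (again using $L+1\ge4$), so the degree-$L$ part and the degree-$(L-1)$ part cancel in this alternating sum, leaving the positive multiple $\frac{(s-1)!}{L!(s+L)!}\,L(k_1-k_2)\,n^{L-1}$ of $n^{L-1}$ plus lower order.

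I expect the only genuine obstacle to be the middle step, namely excluding a second pole of order $L$; everything else is routine bookkeeping on partial fractions and binomial expansions. One caveat worth stating explicitly: this argument produces a threshold $M$ that is not explicit and, in particular, depends on $L$ and on $k_1,k_2$ — so the sharper form of Theorem~\ref{thm-main}, where $M$ depends on $s$ alone, genuinely requires the separate combinatorial/injective argument and cannot be read off from this analytic proof.
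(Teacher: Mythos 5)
Your argument is correct, and it reaches the same asymptotic conclusion as the paper, namely that $[q^n]H^*_{L,s,r,k_1,k_2}(q)$ grows like $\frac{(k_1-k_2)\,n^{L-1}}{(L-1)!\,s(s+1)\cdots(s+L)}$ (your constant $\frac{(k_1-k_2)(s-1)!}{(s+L)!(L-1)!}$ is the same number), but the key step is executed differently. The paper isolates the series $\frac{1-q}{(q^s;q)_{L+1}}=\sum a_nq^n$, rewrites it via the identity $1-q=(1-q^{s+1})-q(1-q^s)$ as the difference $\frac{1}{(1-q^s)(q^{s+2};q)_{L-1}}-\frac{q}{(q^{s+1};q)_L}$, and then applies the quoted Frobenius--Schur asymptotic (Theorem~\ref{schur-thm}) to each of the two resulting restricted partition functions, obtaining $a_n\sim \frac{n^{L-1}}{(L-1)!\,s(s+1)\cdots(s+L)}$; the rest is summation over a window of length $k$ and a shift by $r$. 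You instead work with the full numerator $N(q)=q^r(1-q^{k_1})-(1-q^{k_2})$, factor out $(1-q)$ using $N(1)=0$, $N'(1)=k_2-k_1$, and justify the dominance of the pole at $q=1$ by bounding the multiplicity of every other root of unity in $(q^s;q)_{L+1}$ by $\lceil(L+1)/2\rceil\le L-1$ for $L\ge 3$ --- which is essentially a self-contained proof of the instance of Frobenius--Schur that the paper cites as a black box. What the paper's route buys is that it never has to discuss poles at nontrivial roots of unity at all (coprimality of two consecutive parts does all the work, hidden inside the cited theorem); what your route buys is a single uniform partial-fraction computation that makes explicit exactly where $L\ge 3$ enters and why the two top-degree terms cancel in the alternating sum $p(n-r)-p(n-r-k_1)-p(n)+p(n-k_2)$. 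Your closing caveat is also accurate: like the paper's proof of this theorem, the threshold obtained is non-explicit and depends on all parameters, and the uniformity in Theorem~\ref{thm-main} (an $M$ depending only on $s$) really does require the separate injective construction of Section~2.
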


\begin{rem}
Set $r=s$, $k_1=k$ and $k_2=s$, we confirm the conjecture raised by Berkovich and Uncu~\cite[Conjecture~7.1]{Ber-Unc-19}.
\end{rem}

Let ${\mathcal A}=\{a_1,a_2,\ldots,a_m\}$ be a set of $m$ positive integers.
Denote by $p_{\mathcal A}(n)$ the number of nonnegative integer
solutions of the  diophantine equation
$a_1 x_1+\cdots +a_mx_m=n$, i.e.,
\begin{equation}
\sum_{n=0}^\infty p_{\mathcal A}(n)q^n=\frac{1}{(1-q^{a_1})\ldots (1-q^{a_m})}.
\end{equation}
It should be noted that $p_{\mathcal A}(n)$ is closely related to the
\emph{Frobeinus coin problem},
 see \cite{Alf-2000,Sel-1977} or  https://en.wikipedia.org/wiki/Coin\_problem
 for more details. We shall need
the following result,  see \cite{Be-Ge-Ko-01} or \cite[Theorem 3.15.2]{Wilf-1994} for an elementary proof.
\begin{thm}[Frobeinus-Schur]\label{schur-thm}
If  $\gcd(a_1,\ldots,a_m)=1$, then
\begin{equation}
p_{\mathcal A}(n)\sim \frac{n^{m-1}}{(m-1)!a_1a_2\cdots a_m}.
\end{equation}
\end{thm}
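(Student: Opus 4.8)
The plan is to extract the asymptotics of $p_{\mathcal A}(n)$ directly from the singularity structure of its generating function $F(q)=\prod_{j=1}^m(1-q^{a_j})^{-1}$. This is a rational function, so $F$ has a global partial fraction decomposition whose poles all sit on the unit circle at roots of unity, and the growth rate of the coefficients is governed by the poles of highest order. The whole argument hinges on showing that $q=1$ is the unique pole of maximal order, and the hypothesis $\gcd(a_1,\dots,a_m)=1$ is exactly what forces this.

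First I would analyze the pole structure. A root of unity $\zeta$ is a pole of $F$ precisely when $\zeta^{a_j}=1$ for at least one $j$, and the order of the pole at $\zeta$ equals $\#\{j:\zeta^{a_j}=1\}$. At $\zeta=1$ every factor vanishes, so the pole there has order $m$. For any other root of unity $\zeta\neq 1$, a pole of order $m$ would require $\zeta^{a_j}=1$ for all $j$; writing $\zeta$ as a primitive $d$-th root of unity with $d>1$, this forces $d\mid a_j$ for every $j$, hence $d\mid\gcd(a_1,\dots,a_m)=1$, a contradiction. Therefore $q=1$ is the unique pole of maximal order $m$, and every other pole has order at most $m-1$.

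Next I would identify the leading coefficient of the principal part at $q=1$. Since $\dfrac{1-q^{a_j}}{1-q}=1+q+\cdots+q^{a_j-1}\to a_j$ as $q\to 1$, one has $\prod_{j}(1-q^{a_j})\sim(1-q)^m\,a_1\cdots a_m$, so the principal part of $F$ at $q=1$ has the form $\dfrac{c_m}{(1-q)^m}+\cdots+\dfrac{c_1}{1-q}$ with $c_m=1/(a_1\cdots a_m)$. Collecting the full partial fraction expansion and extracting coefficients via $[q^n](1-q/\zeta)^{-i}=\binom{n+i-1}{i-1}\zeta^{-n}$ expresses $p_{\mathcal A}(n)$ as a quasi-polynomial in $n$. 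The single term coming from $\zeta=1$ with $i=m$ contributes
\[
c_m\binom{n+m-1}{m-1}\sim\frac{n^{m-1}}{(m-1)!\,a_1a_2\cdots a_m},
\]
while every remaining term has $i\le m-1$ and $|\zeta|=1$, hence contributes $\binom{n+i-1}{i-1}\zeta^{-n}=O(n^{m-2})=o(n^{m-1})$. Summing all contributions yields the claimed asymptotic $p_{\mathcal A}(n)\sim \dfrac{n^{m-1}}{(m-1)!\,a_1a_2\cdots a_m}$.

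The main obstacle is the pole-order comparison in the second step: establishing that $q=1$ strictly dominates every competing root of unity. This is the only place the coprimality hypothesis enters, and without it one could have several poles of order $m$ whose contributions would need to be combined (and might even cancel in residue but reinforce in magnitude), destroying the clean single-term asymptotic. Once this is settled, the remaining bookkeeping — computing $c_m$ and bounding the lower-order poles using $|\zeta^{-n}|=1$ and $\binom{n+i-1}{i-1}=O(n^{i-1})$ — is entirely routine.
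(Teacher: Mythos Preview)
Your argument is correct: the partial-fraction analysis of $F(q)=\prod_j(1-q^{a_j})^{-1}$, the identification of $q=1$ as the unique pole of maximal order $m$ via the coprimality hypothesis, the computation $c_m=1/(a_1\cdots a_m)$, and the bound $O(n^{m-2})$ on all remaining contributions are all sound.

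Note, however, that the paper does not supply its own proof of this statement; it merely quotes the result and refers the reader to Beck--Gessel--Komatsu and to Wilf's \emph{generatingfunctionology} (Theorem~3.15.2). Your proof is essentially the classical singularity-analysis argument that appears in those references, so there is nothing to compare against within the paper itself.
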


This paper is organized as follows. In Section 2, we first give two weak forms of Theorem \ref{thm-main}, we then prove Theorem \ref{thm-main} with the aid of  these two weak forms. In Section 3, we give a proof of Theorem \ref{thm-main0-2}.

\section{Proof of Theorem \ref{thm-main}}

In this section, we give a proof of Theorem \ref{thm-main}. To this end, we first show the following two theorems, namely Theorems \ref{thm-asy} and \ref{thm-main-inf}, which can be view as weak forms of Theorem\ref{thm-main}. We then prove Theorem \ref{thm-main} with the aid of Theorems \ref{thm-asy} and \ref{thm-main-inf}.

\begin{thm}\label{thm-asy}
Given integer $s$ and $L\geq 3$, there exists $M_{L,s}$ depending
 on $L$ and $s$ such that for any $\max\{s+1, L\}\leq k\leq s+L$ and $n\geq M_{L,s}$,
\begin{equation}\label{equ-flskn-geq-cls3}
f_{L,s,k}(n)\geq c_{L,s,2}(n).
\end{equation}
\end{thm}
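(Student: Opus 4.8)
The plan is to establish the asymptotic inequality \eqref{equ-flskn-geq-cls3} by comparing the generating functions \eqref{gf1} and \eqref{gf2} and invoking the Frobenius--Schur asymptotics (Theorem~\ref{schur-thm}). First I would rewrite
\[
\sum_{n\ge 0} f_{L,s,k}(n)q^n - \sum_{n\ge 0} c_{L,s,2}(n)q^n
= \frac{q^s(1-q^k)}{(q^s;q)_{L+1}} - \frac{1}{(q^{s+1};q)_L}
= \frac{q^s(1-q^k) - (1-q^s)}{(q^s;q)_{L+1}},
\]
so that, exactly as in the remark preceding Theorem~\ref{thm-main0-2}, the claim is equivalent to the eventual positivity of the $q$-series on the right. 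The denominator $(q^s;q)_{L+1} = (1-q^s)(1-q^{s+1})\cdots(1-q^{s+L})$ has $L+1$ factors with exponents $s, s+1, \dots, s+L$; since $L\ge 3$, consecutive exponents are present, so $\gcd(s, s+1, \dots, s+L)=1$ and Theorem~\ref{schur-thm} applies to $1/(q^s;q)_{L+1}$ with leading term of order $n^L$.

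The key step is then a careful bookkeeping of the numerator. Write $N(q) := q^s(1-q^k) - (1-q^s) = -1 + q^s + q^{s+k+\varepsilon\text{-stuff}}$; more precisely $N(q) = -1 + 2q^s - q^{s+k}$ when I expand, but the useful observation is that multiplying $1/(q^s;q)_{L+1}$ by a polynomial $N(q)$ of the form $-1 + (\text{positive terms})$ produces a series whose coefficients are an integer linear combination of shifted copies of $p_{\mathcal A}(n)$ where ${\mathcal A} = \{s, s+1, \dots, s+L\}$. Since $p_{\mathcal A}(n) \sim c\, n^L$ with $c = 1/(L!\, s(s+1)\cdots(s+L)) > 0$, and the difference $p_{\mathcal A}(n) - p_{\mathcal A}(n-1)$ is of order $n^{L-1}$ (one can see this either from the asymptotic expansion or by noting $p_{\mathcal A}(n)-p_{\mathcal A}(n-j)$ counts something combinatorially bounded in a lower-order way), the contribution of each positive monomial in $N(q)$ eventually dominates the single $-1$. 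Concretely, I would group terms so that the coefficient of $q^n$ in $N(q)/(q^s;q)_{L+1}$ equals $\bigl(p_{\mathcal A}(n-s) - p_{\mathcal A}(n)\bigr) + \bigl(p_{\mathcal A}(n-s) - p_{\mathcal A}(n-s-k)\bigr)$, and then use that $p_{\mathcal A}(n-s)-p_{\mathcal A}(n)$, being (up to sign) a bounded-order finite difference of a polynomially-growing sequence, is $O(n^{L-1})$, while $p_{\mathcal A}(n-s)-p_{\mathcal A}(n-s-k)$ is likewise $O(n^{L-1})$ but with a sign I must control — hmm, actually both finite differences can have either sign, so I need a cleaner grouping.

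A cleaner route, which I would actually pursue: note $p_{\mathcal A}(n)$ is eventually a quasi-polynomial in $n$ of degree $L$ with leading coefficient $c>0$ independent of the residue class, so $p_{\mathcal A}(n) - p_{\mathcal A}(n-1)$ is eventually a quasi-polynomial of degree $L-1$ with leading coefficient $Lc/1 > 0$; iterating, $p_{\mathcal A}(n) - p_{\mathcal A}(n-j) = \sum_{i=1}^{j}\bigl(p_{\mathcal A}(n-i+1)-p_{\mathcal A}(n-i)\bigr)$ is eventually a quasi-polynomial of degree $L-1$ with positive leading coefficient $jLc$. Hence for all large $n$, $p_{\mathcal A}(n-s) - p_{\mathcal A}(n-s-k) > 0$ and it grows like a positive constant times $n^{L-1}$, whereas $p_{\mathcal A}(n) - p_{\mathcal A}(n-s)$ grows like $sLc\, n^{L-1}$. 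Rewriting the coefficient of $q^n$ as $\bigl(p_{\mathcal A}(n-s)-p_{\mathcal A}(n-s-k)\bigr) - \bigl(p_{\mathcal A}(n)-p_{\mathcal A}(n-s)\bigr)$, I need the first bracket to eventually exceed the second; both are $\Theta(n^{L-1})$, so I must compare leading coefficients: $kLc$ versus $sLc$. Since the hypothesis is $k \ge \max\{s+1, L\} > s$, we get $k > s$, so the first leading coefficient strictly dominates, giving positivity for all sufficiently large $n$ — with the threshold $M_{L,s}$ depending only on $L$ and $s$ (indeed on $k$ too, but $k$ ranges over the finite set $\{\max\{s+1,L\},\dots,s+L\}$, so we may take the max over those finitely many $k$, keeping dependence on $L,s$ only). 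The main obstacle is making the quasi-polynomial/finite-difference leading-coefficient comparison fully rigorous — in particular confirming that the degree-$(L-1)$ leading coefficient of $p_{\mathcal A}(n)-p_{\mathcal A}(n-j)$ is exactly $j$ times that of $p_{\mathcal A}(n)-p_{\mathcal A}(n-1)$ and is independent of residue class — which I would extract either from the partial-fraction decomposition of $1/(q^s;q)_{L+1}$ or directly from Theorem~\ref{schur-thm} applied after telescoping; once that is in hand, the inequality $k>s$ closes the argument.
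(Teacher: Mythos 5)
Your overall strategy --- reduce to the eventual positivity of $\bigl(q^s(1-q^k)-(1-q^s)\bigr)/(q^s;q)_{L+1}$, compare leading asymptotic coefficients proportional to $k$ and to $s$, use $k>s$, and then take the maximum threshold over the finitely many admissible $k$ --- is the same as the paper's, and your final leading-coefficient comparison agrees with the paper's equation \eqref{equ-sim-flsk}. However, the step you yourself flag as ``the main obstacle'' is a genuine gap, not a formality. You need $p_{\mathcal A}(n)-p_{\mathcal A}(n-j)\sim jLc\,n^{L-1}$ for ${\mathcal A}=\{s,s+1,\dots,s+L\}$, and this does \emph{not} follow from Theorem~\ref{schur-thm} ``after telescoping'': an asymptotic equivalence $p_{\mathcal A}(n)\sim cn^{L}$ carries no information about first differences, because the error term $p_{\mathcal A}(n)-cn^{L}$ may itself be of order $n^{L-1}$ with a residue-dependent (oscillating) coefficient, and telescoping sums of differences cannot recover a $\Theta(n^{L-1})$ asymptotic from the degree-$L$ statement alone. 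The partial-fraction route you mention does close the gap, but only after the additional observation that for $L\ge 3$ no integer $d\ge 2$ divides at least $L$ of the $L+1$ consecutive integers $s,\dots,s+L$ (so that the coefficient of $n^{L-1}$ in the quasi-polynomial is residue-independent); as written, your proof neither states nor proves this, so the comparison of the two $\Theta(n^{L-1})$ brackets is not justified.

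The paper sidesteps the issue entirely, and you could too: since $\max\{s+1,L\}\le k\le s+L$, the factor $(1-q^k)$ in the numerator of \eqref{gf2} cancels a factor of the denominator, and likewise $1-q^s$ cancels in \eqref{gf1}. Hence $f_{L,s,k}(n)=p_{{\mathcal A}\setminus\{k\}}(n-s)$ and $c_{L,s,2}(n)=p_{{\mathcal A}\setminus\{s\}}(n)$ are honest restricted partition functions on $L$-element sets, each of which (because $L\ge 3$) still contains two consecutive integers and so has gcd $1$. Theorem~\ref{schur-thm} then applies \emph{directly} to each side, giving leading constants $k/\bigl((L-1)!\,s(s+1)\cdots(s+L)\bigr)$ and $s/\bigl((L-1)!\,s(s+1)\cdots(s+L)\bigr)$ respectively, and $k>s$ finishes the proof with no finite-difference analysis at all. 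I recommend replacing your difference-of-quasi-polynomials argument with this cancellation.
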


\pf By definition, we see that $f_{L,s,k}(n)$ is the number of nonnegative integer solutions of the equation $sx_s+(s+1)x_{s+1}+\cdots+(s+L)x_{s+L}=n$, where $x_s\geq 1$ and $x_{k}=0$. Let $A:=\{s,s+1,\ldots,s+L\}$, from the definition of $p_A(n)$, we deduce that
\begin{equation}
f_{L,s,k}(n)=p_{A\setminus \{k\}}(n-s).
\end{equation}
Similarly, by the definition of $c_{L,s,2}(n)$,
\begin{equation}
c_{L,s,2}(n)=p_{A\setminus\{s\}}(n).
\end{equation}
As $L\geq 3$ both  $A\setminus \{k\}$ and $A\setminus \{s\}$ contain two consecutive integers, thus
$\gcd(A\setminus \{k\})=\gcd(A\setminus \{s\})=1$. Hence by Theorem \ref{schur-thm},
\[f_{L,s,k}(n)\sim \frac{k(n-s)^{L}}{L!s(s+1)\cdots (s+L)}\]
and
\[c_{L,s,2}(n)\sim \frac{s(n-s)^{L}}{L!s(s+1)\cdots (s+L)}.\]
Therefore,
\begin{equation}\label{equ-sim-flsk}
f_{L,s,k}(n)-c_{L,s,2}(n)\sim \frac{(k-s)(n-s)^{L}}{L!s(s+1)\cdots (s+L)}.
\end{equation}
From \eqref{equ-sim-flsk}, we see that there exists $M_{L,s,k}$ such that for $n\geq M_{L,s,k}$, \eqref{equ-flskn-geq-cls3} holds. When $L\ge s+1$, set
\[M_{L,s}:=\max\{M_{L,s,L},M_{L,s,L+1},\ldots M_{L,s,s+L}\};\]
and when $L\le s$, set
\[M_{L,s}:=\max\{M_{L,s,s+1},M_{L,s,s+2},\ldots M_{L,s,s+L}\}.\]
Clearly, for $n\ge M_{L,s}$, \eqref{equ-flskn-geq-cls3} valid and $M_{L,s}$ only depends on $s$ and $L$. This completes the proof.\qed

We next give another weak form of Theorem \ref{thm-main}.

\begin{thm}\label{thm-main-inf}
Let  $L$,  $s$ and $k$ be positive integers such that $L\geq 2s^3+5s^2+1$ and  $L\leq k\leq s+L$. Then, for any $n\ge 2s^5+8s^4+s^3-14s^2+3s+1$, we have
\[f_{L,s,k}(n)\geq c_{L,s,2}(n).\]
\end{thm}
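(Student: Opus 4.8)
The plan is to rephrase the inequality purely in terms of restricted partition functions and then exploit that the hypothesis $L\ge 2s^3+5s^2+1$ (together with $k\ge L$) makes $k$ huge compared with $s$. By the two identities established in the proof of Theorem~\ref{thm-asy}, $f_{L,s,k}(n)=p_{A\setminus\{k\}}(n-s)$ and $c_{L,s,2}(n)=p_{A\setminus\{s\}}(n)$, where $A=\{s,s+1,\dots,s+L\}$. Put $B:=A\setminus\{s,k\}=\{s+1,\dots,s+L\}\setminus\{k\}$; conditioning a partition on its number of parts equal to $s$, respectively to $k$, gives
\[p_{A\setminus\{k\}}(n-s)=\sum_{j\ge1}p_B(n-js),\qquad p_{A\setminus\{s\}}(n)=\sum_{j\ge0}p_B(n-jk),\]
so the theorem is equivalent to $\sum_{j\ge1}p_B(n-js)\ge\sum_{j\ge0}p_B(n-jk)$. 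Note that $\{s+1,s+2\}\subseteq B$, since $k\ge L\ge 2s^3+5s^2+1>s+2$.

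The next ingredient is a monotonicity lemma: adjoining parts of size $s+1$ and $s+2$ shows $p_B(m+t)\ge p_B(m)$ whenever $t$ is a nonnegative integer combination of $s+1$ and $s+2$; by Sylvester's formula the Frobenius number of $\{s+1,s+2\}$ is $(s+1)(s+2)-(s+1)-(s+2)=s^2+s-1$, so $p_B(m+t)\ge p_B(m)$ for every integer $t\ge s^2+s$ (and trivially when $m<0$). For $j\ge1$ set $i_j:=\lfloor (jk-s^2-s)/s\rfloor$. The bound $k\ge L\ge 2s^3+5s^2+1$ forces $i_j\ge1$; moreover $i_{j+1}-i_j\ge\lfloor k/s\rfloor\ge 2s^2+5s$, so $j\mapsto i_j$ is strictly increasing, and $jk-i_js\ge s^2+s$, so the lemma yields $p_B(n-i_js)\ge p_B(n-jk)$. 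Summing over $j\ge1$ gives $\sum_{j\ge1}p_B(n-jk)\le\sum_{i\ge1}p_B(n-is)$, so it remains only to absorb the single term $p_B(n)$ coming from $j=0$.

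Since $i_1\ge 2s^2+4s-1$, none of the indices $1,2,\dots,i_1-1$ is an $i_j$, so it suffices to prove
\[\sum_{i=1}^{i_1-1}p_B(n-is)\ge p_B(n)\]
(and there are many more unused indices to spare if a sharper constant is needed). I would establish this by elementary estimates: a lower bound for each $p_B(n-is)$ coming from a small subsystem of $B$ containing $\{s+1,s+2\}$ — or, when $L$ is large compared with $n$, from the observation that only parts $\le n-is$ of $B$ are relevant, which ties $p_B$ to the ordinary partition counting function — together with a crude upper bound for $p_B(n)$. Combining these turns the last display into a polynomial inequality in $n$ and $s$, and forcing it to hold is exactly what yields the threshold $n\ge 2s^5+8s^4+s^3-14s^2+3s+1$.

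The formal reductions above (the two identities, the monotonicity lemma, and the sparse matching) are routine and hold for every admissible triple $(L,s,k)$; the genuine work is the final estimate. Its delicacy is that $L$ has no upper bound, so $B$ ranges between a set whose $p_B$ is a polynomial of degree $L-2=\Theta(s^3)$ (when $L$ is as small as allowed) and a set so large that, up to the value $n$, $p_B$ essentially coincides with the unrestricted partition function; the single quintic bound on $n$ must dominate $p_B(n)$ throughout this whole range. Keeping all constants tight enough to land on precisely $2s^5+8s^4+s^3-14s^2+3s+1$ is the main obstacle I anticipate.
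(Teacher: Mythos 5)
Your reductions are correct and take a genuinely different route from the paper: the paper proves this theorem by splitting $C_{L,s,2}(n)$ into five explicit classes and building five injections $\phi_1,\dots,\phi_5$ into disjoint subsets of $F_{L,s,k}(n)$, whereas you recast both sides as $\sum_{j\ge1}p_B(n-js)$ and $\sum_{j\ge0}p_B(n-jk)$ with $B=\{s+1,\dots,s+L\}\setminus\{k\}$ and match terms. Your handling of the $j\ge1$ terms (the analogue of the paper's $\phi_4,\phi_5$, i.e.\ the partitions containing the part $k$) via the monotonicity lemma $p_B(m+t)\ge p_B(m)$ for $t$ in the numerical semigroup generated by $s+1,s+2$, together with the sparse matching $j\mapsto i_j$, is clean and checks out. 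But the entire difficulty of the theorem has been pushed into the one inequality you do not prove, namely $\sum_{i=1}^{i_1-1}p_B(n-is)\ge p_B(n)$, which absorbs the $j=0$ term; this is precisely the counterpart of the paper's $\phi_1,\phi_2,\phi_3$ (the partitions avoiding $k$), which is where all the real work lives and where the threshold $n\ge 2s^5+8s^4+s^3-14s^2+3s+1$ actually originates (in the paper it comes from the bound $n\le(s-1)\sum_{j=s+1}^{2s^2+5s-1}j$ in the construction of $\phi_3$).

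Moreover, the strategy you sketch for that last step would not work as described. Bounding each $p_B(n-is)$ from below by a small subsystem containing $\{s+1,s+2\}$ gives a quantity that is polynomial in $n$ of degree equal to the subsystem size minus one, while any ``crude upper bound'' for $p_B(n)$ (e.g.\ $p_B(n)\le p(n)$) is exponential in $\sqrt{n}$; already for $s=2$, $n=151$, $L=37$ the full set $B$ has $36$ elements and $p_B(151)$ dwarfs $14$ copies of $p_{B'}(151-2i)$ for any small $B'$, so comparing an absolute lower bound for the summands against an absolute upper bound for $p_B(n)$ loses far too much. What is really needed is a \emph{relative} comparison of $p_B$ at nearby arguments, i.e.\ an injection from $\mathcal{P}_B(n)$ into $\bigsqcup_{i}\mathcal{P}_B(n-is)$ --- and constructing such an injection (using, say, the existence of a large part or of a heavily repeated part when $n$ exceeds the stated threshold) is exactly the content of the paper's Lemmas~\ref{lem-inj-1}--\ref{lem-inj-3}. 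Also note that Frobenius--Schur is an asymptotic statement, not a valid two-sided bound at a fixed finite $n$, so it cannot be invoked to ``land on'' the explicit quintic threshold. As it stands the proposal is an elegant reformulation plus a correct treatment of the easier half, with the essential step asserted rather than proved.
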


To prove Theorem \ref{thm-main-inf}, we shall build an injection $\phi: C_{L,s,2}(n)\to  F_{L,s,k}(n)$.
More specifically, we shall divide $\phi$ into
five injections $\phi_i: C^i_{L,s,2}(n)\to F^i_{L,s,k}(n)$ for $1\leq i\leq 5$, where $\{C^1_{L,s,2}(n), \ldots,  C^5_{L,s,2}(n)\}$ is a set partition of
$ C_{L,s,2}(n)$, and
$(F^1_{L,s,k}(n), \ldots, F^5_{L,s,k}(n))$  is a sequence of
  five disjoint subsets of $F_{L,s,k}(n)$.

We denote each partition
$\alpha\in C_{L,s,2}(n)$ by
$\alpha=((s+1)^{f_{s+1}}\ldots (s+L)^{f_{s+L}})$, where $f_i$ is the number of occurrences of $i$  in $\alpha$. The five  subsets  $C^i_{L,s,2}(n)$ are defined as follows.
\begin{itemize}
\item[(1)] $C^1_{L,s,2}(n)$ is the set of partitions in $C_{L,s,2}(n)$ such that $f_{k}=0$ and there exists $a\geq 2$ such that $f_{as}\geq 1$.
\item[(2)] $C^2_{L,s,2}(n)$ is the set of partitions in $C_{L,s,2}(n)$ such that $f_{k}=0$ and  $f_{as}=0$ for all $a\geq 2$. Moreover, there exists an integer $j$ such that $2s^2+5s-1\ge j\ge s+1$ and $f_j\geq s$.
\item[(3)] $C^3_{L,s,2}(n)$ is the set of partitions in $C_{L,s,2}(n)$ such that $f_{k}=0$ and  $f_{as}=0$ for all $a\geq 2$. Moreover, for any $j$ such that $2s^2+5s-1\ge j\ge s+1$ we have $f_j\leq s-1$.
\item[(4)] $C^4_{L,s,2}(n)$ is the set of partitions in $C_{L,s,2}(n)$ such that $f_{k}\geq 2$.
\item[(5)] $C^5_{L,s,2}(n)$ is the set of partitions in $C_{L,s,2}(n)$ such that $f_{k}=1$.
\end{itemize}
Similarly,  we denote each partition $\beta\in F_{L,s,k}(n)$ by $\beta=(s^{g_s}(s+1)^{g_{s+1}}\ldots (s+L)^{g_{s+L}})$, where $g_i$ is the number of occurrences of
$i$ in $\beta$. From the definition of $F_{L,s,k}(n)$, we see that $g_s\geq 1$ and $g_k=0$. Writing $k=rs+t$ with $0\leq t\leq s-1$, we define the  five
 subsets  $F^i_{L,s,2}(n)$  as follows.
\begin{itemize}
\item[(1)] $F^1_{L,s,k}(n)$ is the set of partitions in $F_{L,s,k}(n)$ such that $r+1\geq g_s\geq 2$ and for any $2\leq i< g_s$, $g_{is}=0$.
\item[(2)] $F^2_{L,s,k}(n)$ is the set of partitions in $F_{L,s,k}(n)$ such that $g_{s}=1$ and  there exists $i\geq 2$ such that $g_{is}=1$. Moreover, for any $j\neq 1,i$, we have $g_{js}=0$.
\item[(3)] $F^3_{L,s,k}(n)$ is the set of partitions in $F_{L,s,k}(n)$ such that $g_{s}=1$ and  $g_{2s}+g_{3s}\geq 2$.
\item[(4)] $F^4_{L,s,k}(n)$ is the set of partitions in $F_{L,s,k}(n)$ such that $g_{s}\geq 2r-4$.
\item[(5)] $F^5_{L,s,k}(n)$ is the set of partitions in $F_{L,s,k}(n)$ such that $g_{s}=r-4$ and $g_{2s}\geq 1$.
\end{itemize}
Since $k\geq L\geq 2s^3+5s^2+1$, we derive that $r\geq 2s^2+5s\geq 7$.
Therefore,  $2r-4>r+1$, which implies that $F^1_{L,s,k}(n)\cap F^4_{L,s,k}(n)=\emptyset$; also $r-4\geq 3$, which implies that $F^1_{L,s,k}(n)\cap F^5_{L,s,k}(n)=\emptyset$, $F^2_{L,s,k}(n)\cap F^5_{L,s,k}(n)=\emptyset$ and $F^3_{L,s,k}(n)\cap F^5_{L,s,k}(n)=\emptyset$.

Now we proceed to construct the five injections explicitly.

\begin{lem}\label{lem-inj-1}
There is an explicit injection $\phi_1:C^1_{L,s,2}(n)\to F^1_{L,s,k}(n)$.
\end{lem}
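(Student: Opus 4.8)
The plan is to realize $\phi_1$ by a single ``split a multiple of $s$'' move. Given $\alpha=((s+1)^{f_{s+1}}\ldots(s+L)^{f_{s+L}})\in C^1_{L,s,2}(n)$, let $a_0=a_0(\alpha)$ be the \emph{smallest} integer $a\ge 2$ with $f_{as}\ge 1$; such an $a$ exists precisely because $\alpha\in C^1_{L,s,2}(n)$. Define $\phi_1(\alpha)$ to be the partition obtained from $\alpha$ by deleting one part equal to $a_0s$ and inserting $a_0$ parts equal to $s$. This move preserves the total, so $\phi_1(\alpha)$ is a partition of $n$; denote it $\beta=(s^{g_s}(s+1)^{g_{s+1}}\ldots(s+L)^{g_{s+L}})$, so that $g_s=a_0$, $g_{a_0s}=f_{a_0s}-1$, and $g_i=f_i$ for all other $i$.

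First I would check $\beta\in F^1_{L,s,k}(n)$. Since every part of $\alpha$ lies in $\{s+1,\ldots,s+L\}$, $\alpha$ has no part $s$, so $g_s=a_0\ge 2$ and $\beta$ has smallest part $s$; the largest part is unchanged, hence still $\le s+L$. As $k>s$ (indeed $k\ge L>s$ under the standing hypotheses) and $a_0s\ne k$ (because $f_k=0$ while $f_{a_0s}\ge 1$), neither the deletion nor the insertion alters the multiplicity of $k$, which stays $0$, so $\beta\in F_{L,s,k}(n)$. For the conditions defining $F^1_{L,s,k}(n)$: for $2\le i<a_0$ we have $i\ne a_0$, hence $g_{is}=f_{is}=0$ by the minimality of $a_0$; and $a_0\le r+1$ because $a_0s\le s+L$ gives $a_0\le 1+\lfloor L/s\rfloor$, while $L\le k$ gives $\lfloor L/s\rfloor\le\lfloor k/s\rfloor=r$. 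This last estimate is essentially the only inequality in the argument, and it is exactly where the hypothesis $L\le k$ enters.

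Finally I would establish injectivity by exhibiting a left inverse: for $\beta$ in the image of $\phi_1$, merge all $g_s(\beta)$ parts equal to $s$ into the single part $g_s(\beta)\cdot s$. Since $g_s(\phi_1(\alpha))=a_0(\alpha)$ and $\alpha$ has no part $s$, this merge returns $\alpha$; consequently $\phi_1(\alpha)=\phi_1(\alpha')$ forces $a_0(\alpha)=g_s(\phi_1(\alpha))=g_s(\phi_1(\alpha'))=a_0(\alpha')$ and then $\alpha=\alpha'$. I do not expect a serious obstacle here: the only point requiring care is that $a_0$ is chosen to be the \emph{least} admissible multiplier, which is what simultaneously forces the vanishing $g_{is}=0$ for $2\le i<g_s$ and keeps $a_0$ small enough that $a_0\le r+1$; everything else is routine bookkeeping on multiplicities.
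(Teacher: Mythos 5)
Your proposal is correct and coincides with the paper's own proof: the same map (delete one part $a_0s$ with $a_0\ge 2$ minimal, insert $a_0$ copies of $s$), the same verification that $a_0\le r+1$ via $a_0s\le s+L\le s+k$, and the same left inverse merging the $g_s$ copies of $s$ back into a single part $g_s\cdot s$. No issues.
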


\begin{proof} Let  $\alpha=((s+1)^{f_{s+1}}\ldots (s+L)^{f_{s+L}})\in C^1_{L,s,2}(n)$ with  $f_k=0$.  Let $a\geq 2$ be the smallest integer such that
 $f_{as}\geq 1$. We  define
\begin{equation}\label{equ-phi1}
\phi_1(\alpha):=(s^{g_s}\ldots (s+L)^{g_{s+L}})=(s^{a}\ldots (as)^{f_{as}-1}\ldots (s+L)^{f_{s+L}}).
\end{equation}
Clearly, $|\phi_1(\alpha)|=|\alpha|=n$, $g_k=f_k=0$ and $g_s=a\geq 2$. Moreover from $L\leq k\leq s+L$ and $k=rs+t$, we deduce that $as\leq s+L\leq s+k=(r+1)s+t$. Thus $a\leq r+1$. Hence $r+1\geq a=g_s\geq 2$. From the choice of $a$, we see that for any $2\leq i< a$,  we have $g_{is}=f_{is}=0$. From the above analysis, we derive that $\phi_1(\alpha)\in F^1_{L,s,k}(n)$.

It remains to show that $\phi_1$ is an injection. Let
$$I^1_{L,s,k}(n)=\{\phi_1(\alpha)\colon \alpha\in C^1_{L,s,2}(n)\}$$
 be the image set of $\phi_1$, which has been shown to be a subset of $F^1_{L,s,k}(n)$. We wish to construct a map $\psi_1\colon I^1_{L,s,k}(n)\rightarrow C^1_{L,s,2}(n)$ such that for any $\alpha\in C^1_{L,s,2}(n)$,
 \[\psi_1(\phi_1(\alpha))=\alpha.\]
 Let $\beta=(s^{g_s}\ldots (s+L)^{g_{s+L}})\in I^1_{L,s,k}(n)$, that is, there exists $\alpha\in C^1_{L,s,2}(n)$ such that $\phi_1(\alpha)=\beta$. From the construction \eqref{equ-phi1}, we see that $sg_s\leq s+L$ and $sg_s\neq k$. Define
 \[\psi_1(\beta)=((s+1)^{g_{s+1}}\ldots (sg_s)^{g_{sg_s}+1}\ldots (s+L)^{g_{s+L}}).\]
It is easy to check that $\psi_1(\beta)\in C^1_{L,s,2}(n)$ and $\psi_1(\phi_1(\alpha))=\alpha$. This completes the proof.
\end{proof}

\begin{exa}
For $s=3$, $L=110$, $k=112$ and $n=1205$, let
$$\alpha:=(9^7,15^3,16^2,20^9,30^8,40^2,80,97^5).$$
It is trivial to check that $\alpha\in C^1_{110,3,2}(1205)$. Applying $\phi_1$ to $\alpha$, we see that $a=3$. Hence
\[\phi_1(\alpha)=(3^3,9^6,15^3,16^2,20^9,30^8,40^2,80,97^5),\]
which is  in $F^1_{110,3,112}(1205)$. Moreover, applying $\psi_1$ to $\phi_1(\alpha)$, we recover $\alpha$.
\end{exa}

\begin{lem}\label{lem-inj-2}
There is an explicit injection $\phi_2: C^2_{L,s,2}(n)\to F^2_{L,s,k}(n)$.
\end{lem}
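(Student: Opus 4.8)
The plan is to follow the same template as Lemma~\ref{lem-inj-1}: build an explicit weight-preserving map $\phi_2$ on $C^2_{L,s,2}(n)$, check that its image lies in $F^2_{L,s,k}(n)$, and then exhibit a left inverse $\psi_2$ on the image set $I^2_{L,s,k}(n):=\{\phi_2(\alpha)\colon\alpha\in C^2_{L,s,2}(n)\}$, which forces $\phi_2$ to be injective. First I would observe that the case $s=1$ is vacuous: the defining conditions of $C^2_{L,1,2}(n)$ include $f_a=0$ for every $a\ge 2$, and since every part counted by $c_{L,1,2}$ is at least $2$, this leaves only the empty partition, so $C^2_{L,1,2}(n)=\emptyset$. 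Hence we may assume $s\ge 2$.

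For the construction, take $\alpha=((s+1)^{f_{s+1}}\cdots(s+L)^{f_{s+L}})\in C^2_{L,s,2}(n)$ and let $j$ be the smallest index with $s+1\le j\le 2s^2+5s-1$ and $f_j\ge s$; such a $j$ exists by the definition of $C^2_{L,s,2}(n)$. Since $f_j\ge 1$ and $\alpha$ has no part divisible by $s$ (its parts exceed $s$ and $f_{as}=0$ for all $a\ge 2$), the integer $j$ is not divisible by $s$; moreover $j-1\ge s\ge 2$, so the three parts $j$, $s$, $(j-1)s$ are pairwise distinct. I then define $\phi_2(\alpha)$ to be the partition obtained from $\alpha$ by deleting $s$ copies of the part $j$ and inserting one part $s$ and one part $(j-1)s$; since $s+(j-1)s=sj$, this preserves the weight, so $\phi_2(\alpha)$ is a partition of $n$. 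In the $g$-notation this is $g_s=1$, $g_j=f_j-s$, $g_{(j-1)s}=1$, and $g_i=f_i$ for every other $i$.

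To see that $\phi_2(\alpha)\in F^2_{L,s,k}(n)$, everything rests on the inequality
\[
(j-1)s\ \le\ (2s^2+5s-2)s\ =\ 2s^3+5s^2-2s\ <\ 2s^3+5s^2+1\ \le\ L\ \le\ k\ \le\ s+L .
\]
From it: all parts of $\phi_2(\alpha)$ lie in $\{s,\dots,s+L\}$ (in particular $(j-1)s\le s+L$ and $j\le s+L$); no part equals $k$, since $f_k=0$ in $\alpha$ and $s,\,j,\,(j-1)s$ are all $<k$; the smallest part is $s$ and $g_s=1$. Finally, because $\alpha$ contributed no part divisible by $s$ and the only multiples of $s$ we inserted are $s$ and $(j-1)s$ (while the removed part $j$ is not a multiple of $s$), the unique index $a\ge 2$ with $g_{as}\ne 0$ is $a=j-1$, with $g_{(j-1)s}=1$ --- exactly the defining condition of $F^2_{L,s,k}(n)$. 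For injectivity, given $\beta\in I^2_{L,s,k}(n)$ we read off from $\beta$ its unique index $i\ge 2$ with $g_{is}=1$ and let $\psi_2(\beta)$ be the partition obtained by deleting the parts $s$ and $is$ and inserting $s$ copies of $i+1$; when $\beta=\phi_2(\alpha)$ we have $i=j-1$, so $\psi_2(\phi_2(\alpha))=\alpha$, and $\phi_2$ is injective.

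I expect the only real subtlety to be the numerology: the inserted part $(j-1)s$ must be simultaneously a legal part ($\le s+L$), distinct from $k$, and distinct from $s$, and getting the cutoff $j\le 2s^2+5s-1$ in the definition of $C^2_{L,s,2}(n)$ to mesh with the hypothesis $L\ge 2s^3+5s^2+1$ is precisely what makes all three hold. A minor secondary point is verifying that no unwanted multiple of $s$ is created, which is exactly where the condition ``$f_{as}=0$ for all $a\ge 2$'' in the definition of $C^2_{L,s,2}(n)$ is used.
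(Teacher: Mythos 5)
Your proposal is correct and follows essentially the same route as the paper: the same move of trading $s$ copies of the minimal $j$ with $f_j\ge s$ for one part $s$ plus one part $(j-1)s$, the same size bound $(j-1)s<L\le k$, and the same left inverse $\psi_2$ recovering $j=i+1$ from the unique multiple $is$. Your extra observations (that $C^2_{L,1,2}(n)$ is vacuous and that $j$ is not a multiple of $s$, so the three exchanged parts are pairwise distinct) are correct and if anything make the well-definedness of the construction slightly more explicit than in the paper.
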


\pf For $\alpha=((s+1)^{f_{s+1}}\ldots (s+L)^{f_{s+L}})\in C^2_{L,s,2}(n)$, by definition we see $f_k=0$, for any $a\geq 2$ we have $f_{as}=0$. Moreover, there exists $s+1\leq j\leq 2s^2+5s-1$ such that $f_j\geq s$. We choose such $j$ to be minimum, that is, $j=\min\{i\colon f_{i}\geq s\}$. Define
\begin{equation}\label{equ-phi2}
\phi_2(\alpha)=(s^{g_s}\ldots (s+L)^{g_{s+L}})=(s^{1}\ldots (j)^{f_{j}-s}\ldots ((j-1)s)^1\ldots (s+L)^{f_{s+L}}).
\end{equation}

From $k\geq L\geq 2s^3+5s^2+1$ and $j\leq 2s^2+5s-1$, we deduce that $k>s(2s^2+5s)>(j-1)s$. Thus $f_k=g_k=0$. Moreover, it is clear to see that $g_s=g_{(j-1)s}=1$ and for any $i\neq 1, j-1$, $g_{is}=f_{is}=0$. Furthermore, $|\phi_2(\alpha)|=|\alpha|-js+s+(j-1)s=n$. Hence we derive that $\phi_2(\alpha)\in F^2_{L,s,k}(n)$.

It remains to show that  $\phi_2$ is an injection. Let
$$I^2_{L,s,k}(n)=\{\phi_2(\alpha)\colon \alpha\in C^2_{L,s,2}(n)\}$$
 be the image set of $\phi_2$, which has been shown to be a subset of $F^2_{L,s,k}(n)$. We wish to construct a map $\psi_2\colon I^2_{L,s,k}(n)\rightarrow C^2_{L,s,2}(n)$ such that for any $\alpha\in C^2_{L,s,2}(n)$,
 \[\psi_2(\phi_2(\alpha))=\alpha.\]
 Let $\beta=(s^{g_s}\ldots (s+L)^{g_{s+L}})\in I^2_{L,s,k}(n)$, that is, there exists $\alpha\in C^2_{L,s,2}(n)$ such that $\phi_2(\alpha)=\beta$. From the definition of $F^2_{L,s,k}(n)$, we see that there exists a unique $i\ge 2$ such that $g_{is}=1$. By \eqref{equ-phi2}, we have $k\geq L>2s^2+5s-1\geq i+1\geq s+1$. Moreover, since $j$ is not a multiple of $s$, we see that $i+1$ is not a multiple of $s$. Hence we may  define
 \[\psi_2(\beta)=((s+1)^{g_{s+1}}\ldots (i+1)^{g_{i+1}+s}\ldots (is)^0 \ldots(s+L)^{g_{s+L}}).\]
It is easy to check that $\psi_2(\beta)\in C^2_{L,s,2}$ and $\psi_2(\phi_2(\alpha))=\alpha$. This completes the proof.\qed

\begin{exa} Let $s=3$, $L=103$, $k=103$ and $n=1286$. Let
\[\alpha=(10^1,11^3,20^7,28^2,31^7,46^9,52^3,65^4)\]
which is in $C^2_{103,3,2}(1286)$. It is clear that $j=11$. Applying $\phi_2$ to $\alpha$, we see that
\[\phi_2(\alpha)=(3^1,10^1,20^7,28^2,30^1,31^7,46^9,52^3,65^4)\]
which is in $F^2_{103,3,103}(1286)$. Applying $\psi_2$ to $\phi_2(\alpha)$, we have $i=10$ and $\psi_2(\phi_2(\alpha))=\alpha$.
\end{exa}

\begin{lem}\label{lem-inj-3}
There is an explicit injection $\phi_3: C^3_{L,s,2}(n)\to F^3_{L,s,k}(n)$.
\end{lem}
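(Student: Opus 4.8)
The plan is to turn the numerical hypothesis on $n$ into a structural fact about $\alpha\in C^3_{L,s,2}(n)$ and then carve a single part of $\alpha$ into the combination prescribed by the definition of $F^3_{L,s,k}(n)$. First note that for $s=1$ the set $C^3_{L,1,2}(n)$ is empty, since the defining conditions force all parts to be $\ge 2$ while no part may be a multiple of $1=s$; so assume $s\ge 2$. For $\alpha=((s+1)^{f_{s+1}}\cdots(s+L)^{f_{s+L}})\in C^3_{L,s,2}(n)$ one has $f_j\le s-1$ whenever $s+1\le j\le 2s^2+5s-1$, so the portion of $n$ carried by parts not exceeding $2s^2+5s-1$ is at most
\[
(s-1)\sum_{j=s+1}^{2s^2+5s-1}j\;=\;2s^5+8s^4+s^3-14s^2+3s\;<\;n .
\]
Hence $\alpha$ has a part exceeding $2s^2+5s-1$; since no part of $\alpha$ is a multiple of $s$ and $2s^2+5s=s(2s+5)$, $\alpha$ has a part $p\ge 2s^2+5s+1$, which I take to be its largest part.

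Next I would define $\phi_3(\alpha)$ to be the partition obtained from $\alpha$ by deleting one copy of $p$ and adjoining the four parts $p-5s$, $2s$, $2s$, $s$. That $\phi_3(\alpha)\in F^3_{L,s,k}(n)$ is then a routine check: the weight is unchanged; $p-5s\ge 2s^2+1\ge s$ and $p-5s\le s+L$, so $p-5s$ is a legitimate part; $p-5s\neq k$, for $p-5s=k$ would force $p=k+5s\ge L+5s>s+L\ge p$; the inserted parts $2s$ satisfy $2s\le s+L$ and $2s\neq k$ because $L\ge 2s^3+5s^2+1$; and since $\alpha$ has no part equal to $s$, $2s$ or $3s$ and $p-5s>3s$ (here $s\ge 2$ is used), the resulting partition has $g_s=1$, $g_{2s}=2$, $g_{3s}=0$, so its smallest part is $s$ and $g_{2s}+g_{3s}=2$, as required.

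The real work is injectivity, and this is where I expect the main difficulty. As in the proofs of Lemmas \ref{lem-inj-1} and \ref{lem-inj-2}, I would build a map $\psi_3$ on the image $I^3_{L,s,k}(n)=\{\phi_3(\alpha)\colon\alpha\in C^3_{L,s,2}(n)\}$ with $\psi_3(\phi_3(\alpha))=\alpha$. In $\beta=\phi_3(\alpha)$ the single part $s$ and the two parts $2s$ are recognizable because $\alpha$ contributed none of them; deleting them recovers the multiset obtained from $\alpha$ by lowering one copy of its largest part by $5s$. The obstacle is to extract from this multiset, canonically, both the value $p$ and the information that it was $p$ (and not some other near-maximal part) that was lowered — the reduced copy $p-5s$ may coincide with an existing part of $\alpha$, and if $p$ occurred with multiplicity $\ge 2$ it is no longer maximal. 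Overcoming this should exploit the two features of $C^3$ not yet used, namely that $p\ge 2s^2+5s+1$ while every part below $2s^2+5s$ has multiplicity at most $s-1$, and will probably require distinguishing the cases $f_p=1$ and $f_p\ge 2$ — or, alternatively, redesigning $\phi_3$ so that it breaks $p$ into $s$, a controlled number of copies of $2s$ and $3s$, and a single small remainder placed in a forced position. Once $\psi_3$ is shown to be a left inverse the lemma follows, and I would conclude with an explicit numerical example in the style of the preceding ones.
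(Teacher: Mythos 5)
Your forward map is well defined and does land in $F^3_{L,s,k}(n)$, and your derivation of the existence of a part $p\ge 2s^2+5s+1$ matches the paper's claim. But the map you actually wrote down --- delete one copy of the largest part $p$ and adjoin $p-5s,\,2s,\,2s,\,s$ --- is \emph{not} injective, and you correctly sense this but do not repair it; the lemma is therefore not proved. A concrete collision: take $s=3$, $L=k=100$, $n=1074$ (so $n\ge 2s^5+8s^4+s^3-14s^2+3s+1=1045$), and
\[
\alpha_1=(50,49,47^{20},35),\qquad \alpha_2=(64,47^{20},35^2),
\]
both in $C^3_{100,3,2}(1074)$. Your map sends $\alpha_1$ to $(49,47^{20},35^2,6^2,3)$ by replacing $50$ with $35$, and sends $\alpha_2$ to the \emph{same} partition by replacing $64$ with $49$. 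The failure is exactly the one you flag: after the modification there is no canonical way to tell which part was lowered, because $p-5s$ can collide with a pre-existing part and the lowered part need not remain identifiable as ``the largest.''

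The missing idea --- which your final sentence gestures at but does not execute --- is to make the decomposition of the removed part \emph{self-describing}. The paper removes one copy of the \emph{smallest} part $j\ge 2s^2+5s+1$, writes $j=cs+d$ with $1\le d\le s-1$, and replaces it by one $s$, exactly $s+1$ copies of $s+d$, and $x$ copies of $2s$ plus $y$ copies of $3s$ where $c-s-d-2=2x+3y$ with $y\in\{0,1\}$. The point is that every datum needed to reconstruct $j$ is then readable off the image: since every part of $\alpha$ in $[s+1,2s^2+5s-1]$ has multiplicity at most $s-1$, the unique $i\in[s+1,2s-1]$ with $g_i\ge s+1$ recovers $d=i-s$, and $g_{2s},g_{3s}$ recover $x,y$ hence $c$, so $j=s+2sg_{2s}+3sg_{3s}+(s+1)i$ and the inverse map is forced. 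In other words, the hypothesis $f_j\le s-1$ for small $j$ is not merely used to guarantee a large part exists (as in your argument); it is the mechanism that makes the inserted ``marker'' of $s+1$ equal small parts unambiguous. Without some such encoding, any variant of ``subtract a fixed amount from one big part'' will suffer collisions of the type above.
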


\pf Given $\alpha=((s+1)^{f_{s+1}}\ldots (s+L)^{f_{s+L}})\in C^3_{L,s,2}(n)$, by definition we see $f_k=0$, and for any $a\geq 2$ we have $f_{as}=0$. Moreover, $f_j\leq s-1$ for any $2s^2+5s-1\geq j\geq s+1$. We claim that there exists $i\geq 2s^2+5s+1$ such that $f_i\geq 1$. Otherwise, we see that $n=|\alpha|\leq (s-1)(s+1)+(s-1)(s+2)+\cdots+(s-1)(2s^2+5s-1)=2s^5+8s^4+s^3-14s^2+3s$, which is contradict to $n\ge 2s^5+8s^4+s^3-14s^2+3s+1$. Hence our claim has been verified.

From the above claim, we may set $j=\min\{i\colon i\geq 2s^2+5s+1\}$ and $j=cs+d$, where $1\leq d\leq s-1$. From $j\geq 2s^2+5s+1$ we see that $c\geq 2s+5$. Moreover, it is trivial to check that
\begin{equation}\label{equ-te-1-12}
j=cs+d=s+(s+1)(s+d)+s(c-s-d-2).
\end{equation}
Notice that $c-s-d-2\geq 2s+5-s-(s-1)-2=4$. It is well known that  $c-s-d-2$ can be uniquely written  as $2x+3y$, where $0\leq y\leq 1$. Now we may define $\phi_3(\alpha)$ as follows.
\begin{eqnarray}\label{equ-phi3}
\phi_3(\alpha)&=&(s^{g_s}\ldots (s+L)^{g_{s+L}})\nonumber\\
&=&(s^{1}\ldots  (s+d)^{f_{s+d}+s+1}\ldots (2s)^x \ldots (3s)^y\ldots j^{f_j-1}\ldots (s+L)^{f_{s+L}}).
\end{eqnarray}
From $c-s-d-2\geq 4$ we see that $g_{2s}+g_{3s}=x+y\geq 2$. Moreover, $k\geq L> 3s$ yields that $g_k=f_k=0$. Furthermore, we may calculate $|\phi_3(\alpha)|$ as follows.
\begin{eqnarray}\label{equ-phi3-weight}
|\phi_3(\alpha)|&=&|\alpha|+s+(s+1)(s+d)+x\cdot 2s+y\cdot 3s -j\nonumber\\
&=&n+s+(s+1)(s+d)+s(2x+3y)-j\nonumber\\
&=&n+s+(s+1)(s+d)+s(c-s-d-2)-j\nonumber\\
&=&n.
\end{eqnarray}
The last equation follows from \eqref{equ-te-1-12}. Hence  $\phi_3(\alpha)\in F^3_{L,s,k}(n)$.

It remains to show that  $\phi_3$ is an injection. Let
$$I^3_{L,s,k}(n)=\{\phi_3(\alpha)\colon \alpha\in C^3_{L,s,2}(n)\}$$
 be the image set of $\phi_3$, which has been shown to be a subset of $F^3_{L,s,k}(n)$. We wish to construct a map $\psi_3\colon I^3_{L,s,k}(n)\rightarrow C^3_{L,s,2}(n)$ such that for any $\alpha\in C^3_{L,s,2}(n)$,
 \[\psi_3(\phi_3(\alpha))=\alpha.\]
 Let $\beta=(s^{g_s}\ldots (s+L)^{g_{s+L}})\in I^3_{L,s,k}(n)$, that is, there exists $\alpha\in C^3_{L,s,2}(n)$ such that $\phi_3(\alpha)=\beta$. From the definition of $F^3_{L,s,k}(n)$, we see that  $g_{2s}+g_{3s}\geq 2$. By \eqref{equ-phi3} and the fact $f_i\leq s-1$ for all $s+1\leq i\leq 2s^2+5s-1$, we see that there exists a unique $s+1\leq i\leq 2s-1$ such that $g_i\geq s+1$. Moreover, by \eqref{equ-phi3-weight}, we see that
 $$s+2sx+3sy+(s+d)(s+1)=j.$$
  Thus
  $$s+2sg_{2s}+3sg_{3s}+i(s+1)=j\neq k.$$
   Hence we may  define
 \[\psi_3(\beta)=((s+1)^{g_{s+1}}\ldots i^{g_{i}-s-1}\ldots 2s^0\ldots 3s^0\ldots w^{g_w+1} \ldots(s+L)^{g_{s+L}}),\]
 where $w=s+2sg_{2s}+3sg_{3s}+i(s+1)$.
It is easy to check that $\psi_3(\beta)\in C^3_{L,s,2}(n)$ and $\psi_3(\phi_3(\alpha))=\alpha$. This completes the proof.\qed

\begin{exa}
For example, let $s=3$, $L=105$, $k=105$ and $n=1057$. Let
\[\alpha=(4^2,7^2,11^2,13^2,16^2,19^2,32^2,55,58^3,61^4,76^5)\]
be a partition in $C^3_{105,3,2}(1057)$. It is easy to see that $j=55$. Hence $c=18$ and $d=1$ and $c-s-d-2=12=2*6$. So $x=6$ and $y=0$. Applying $\phi_3$ on $\alpha$,
\[\phi_3(\alpha)=(3,4^6,6^6,7^2,11^2,13^2,16^2,19^2,32^2,58^3,61^4,76^5).\]
It is trivial to check that $\phi_3(\alpha)\in F^3_{105,3,105}(1057)$. Applying $\psi_3$ to $\phi_3(\alpha)$ we recover $\alpha$.
\end{exa}

\begin{lem}\label{lem-inj-4}
There is an explicit injection $\phi_4 : C^4_{L,s,2}(n)\to F^4_{L,s,2}(n)$.
\end{lem}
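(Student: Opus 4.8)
The plan is to construct the injection $\phi_4\colon C^4_{L,s,2}(n)\to F^4_{L,s,k}(n)$ by "cashing in" the parts equal to $k$ that are present in $\alpha$. Given $\alpha=((s+1)^{f_{s+1}}\ldots(s+L)^{f_{s+L}})\in C^4_{L,s,2}(n)$, we have by definition $f_k\geq 2$. Writing $k=rs+t$ with $0\leq t\leq s-1$, the idea is to remove all $f_k$ copies of $k$, reduce to a single copy (or none) of $k$ in order to satisfy the constraint $g_k=0$, and reinject the weight $f_k\cdot k$ as a large number of copies of $s$ together with, if $t\neq 0$, a controlled correction term (one part equal to $s+t$, or a part of the form $as+t$) to absorb the residue $t$ modulo $s$. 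Concretely one sets $g_s := f_s + r\cdot f_k + (\text{correction})$ — but since $f_s$ is not a coordinate of $\alpha$ (parts of $\alpha$ are $\geq s+1$), we actually start from $g_s=0$ and build up $g_s\approx r f_k$, which since $f_k\geq 2$ gives $g_s\gtrsim 2r$, landing in $F^4$ where the defining condition is $g_s\geq 2r-4$. The small slack ($2r-4$ rather than $2r$) is precisely there to accommodate the rounding needed to handle the residue $t$.

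First I would fix notation: write $k=rs+t$, and set $g_k=0$. For the weight bookkeeping, $f_k$ copies of $k$ carry total weight $f_k(rs+t)$. If $t=0$ this is $f_k r s$, so I put $g_s=f_k r$ and keep all other multiplicities equal to those of $\alpha$; then $g_s = f_k r\geq 2r\geq 2r-4$ and $|\phi_4(\alpha)|=n$. If $t\neq 0$, I write $f_k t = qs + t'$ with $0\leq t'<s$ (or more cleanly, note $f_k\geq 2$ and split off one or two copies of $k$ to realize the residue as a single admissible part $as+t'$ with $a$ small), set $g_s$ equal to $f_k r$ plus the quotient contribution, and add one extra part equal to $s+t'$ (or $2s+t'$, chosen so it is $\neq k$ and lies in $\{s+1,\ldots,s+L\}$, using $L\geq 2s^3+5s^2+1$ which is far larger than anything of size $O(s)$). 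The only subtlety is confirming $g_s\geq 2r-4$ after the correction: removing at most, say, two copies of $k$ from the count costs at most $2r$ from the naive $g_s=f_kr$, but since $f_k\geq 2$ we still have $g_s\geq 2r - (\text{at most }4)$ in the worst case — this is exactly why the threshold in $F^4$ was set at $2r-4$.

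For injectivity I would, as in the previous lemmas, exhibit the inverse $\psi_4$ on the image $I^4_{L,s,k}(n)=\phi_4(C^4_{L,s,2}(n))$. Given $\beta=(s^{g_s}\ldots(s+L)^{g_{s+L}})\in I^4_{L,s,k}(n)$, the recipe is to recover $f_k$ from $g_s$ and the correction part: reading $g_s$ modulo $r$ (together with whether the small correction part $s+t'$ or $2s+t'$ is present) determines $f_k$ uniquely, and then one rebuilds $\alpha$ by deleting the $g_s$ copies of $s$ and the correction part and adjoining $f_k$ copies of $k$. One must check $\psi_4(\beta)\in C^4_{L,s,2}(n)$ — in particular $f_k\geq 2$, which follows from $g_s\geq 2r-4$ and the structure of the construction — and that $\psi_4(\phi_4(\alpha))=\alpha$, which is immediate from the definitions. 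I expect the main obstacle to be the residue-handling when $s\nmid k$: one has to choose the correction part(s) so that (a) the total weight is exactly preserved, (b) no forbidden part $k$ is created and no part exceeds $s+L$, and (c) the data $(g_s,\text{correction})$ still encodes $f_k$ injectively and lands in the regime $g_s\geq 2r-4$ that defines $F^4$. All three are bookkeeping, but getting a single clean formula that covers all residues $t$ simultaneously is the fiddly part; splitting into the cases $t=0$ and $t\neq 0$ (and possibly $f_k=2$ versus $f_k\geq 3$) keeps it manageable.
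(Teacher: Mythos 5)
Your overall strategy --- delete the $f_k\ge 2$ copies of $k$ and convert their weight into many copies of $s$ plus a small correction, so that $g_s$ lands above the threshold $2r-4$ --- is the same as the paper's. But the step you flag as ``the fiddly part'' (absorbing the residue $t$ when $s\nmid k$) is precisely the content of the lemma, and your proposal never resolves it: you offer several variants (aggregate $f_kt=qs+t'$ and add one part $s+t'$; or split off one or two copies of $k$; separate cases $t=0$, $t\neq 0$, $f_k=2$, $f_k\ge 3$) without committing to one, and the inversion recipe you do state --- ``reading $g_s$ modulo $r$'' --- fails as written: in your construction $g_s=f_kr+\lfloor f_kt/s\rfloor$, and since $f_k$ (hence the quotient $\lfloor f_kt/s\rfloor$) can greatly exceed $r$, the residue of $g_s$ modulo $r$ does not determine $f_k$. (The map $f_k\mapsto f_kr+\lfloor f_kt/s\rfloor$ is strictly increasing, so your construction is in principle invertible, but not by the recipe you give; moreover the aggregated correction part $s+t'$ collides with the $s$-count when $t'=0$, forcing yet another case.) Your explanation of the $-4$ slack is also not quite the real one.

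The paper closes the gap with a single uniform identity that makes all of this case analysis disappear: since $k=rs+t$, each copy of $k$ is traded for $r-2$ copies of $s$ together with one copy of $2s+t$, because $s(r-2)+(2s+t)=rs+t=k$. Doing this to all $f_k$ copies gives $g_s=f_k(r-2)\ge 2(r-2)=2r-4$ (which is the actual reason the threshold in $F^4$ is $2r-4$), $g_{2s+t}=f_{2s+t}+f_k$, and $g_k=0$; the part $2s+t$ is admissible for every residue $t$, including $t=0$, since $s+1\le 2s+t<7s+t\le rs+t=k\le s+L$. Injectivity is then immediate: on the image, $g_s$ is a multiple of $r-2$, one recovers $f_k=g_s/(r-2)$, and $g_{2s+t}\ge g_s/(r-2)$ guarantees that $\psi_4$ can remove that many copies of $2s+t$ and restore $f_k$ copies of $k$. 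If you adopt this per-copy decomposition, your argument becomes the paper's proof.
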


\pf Given $\alpha=((s+1)^{f_{s+1}}\ldots (s+L)^{f_{s+L}})\in C^4_{L,s,2}(n)$, by definition we see $f_k\geq 2$. Recall that $k=rs+t$, where $0\leq t\leq s-1$ and $r\geq 2s^2+5s\geq 7$. We may define $\phi_4(\alpha)$ as follows.
\begin{equation}\label{equ-phi4}
\phi_4(\alpha)=(s^{g_s}\ldots (s+L)^{g_{s+L}})
=(s^{f_k(r-2)}\ldots  (2s+t)^{f_{2s+t}+f_k}\ldots k^0 \ldots (s+L)^{f_{s+L}}).
\end{equation}
It is clear that $g_s=f_k(r-2)\geq 2(r-2)$. Moreover, $2s+t<7s+t\leq rs+t= k$ and $$|\phi_4(\alpha)|=|\alpha|+sf_k(r-2)+(2s+t)f_k-kf_k=n.$$
Hence  $\phi_4(\alpha)\in F^4_{L,s,k}(n)$.

It remains to show that $\phi_4$ is an injection. Let
$$I^4_{L,s,k}(n)=\{\phi_4(\alpha)\colon \alpha\in C^4_{L,s,2}(n)\}$$
 be the image set of $\phi_4$, which has been shown to be a subset of $F^4_{L,s,k}(n)$. We wish to construct a map $\psi_4\colon I^4_{L,s,k}(n)\rightarrow C^4_{L,s,2}(n)$ such that for any $\alpha\in C^4_{L,s,2}(n)$,
 \[\psi_4(\phi_4(\alpha))=\alpha.\]
 Let $\beta=(s^{g_s}\ldots (s+L)^{g_{s+L}})\in I^4_{L,s,k}(n)$, that is, there exists $\alpha\in C^4_{L,s,2}(n)$ such that $\phi_4(\alpha)=\beta$. From \eqref{equ-phi4}, the construction of $\phi_4$, we see that $g_s$ is a multiple of $(r-2)$. Moreover, $g_{2s+t}\geq g_s/(r-2)$. We may define $\psi_4$ as follows.
 \[\psi_4(\beta)=((s+1)^{g_{s+1}}\ldots (2s+t)^{g_{2s+t}-g_{s}/(r-2)}\ldots k^{g_s/(r-2)} \ldots(s+L)^{g_{s+L}}).\]
It is easy to check that $\psi_4(\beta)\in C^4_{L,s,2}(n)$ and $\psi_4(\phi_4(\alpha))=\alpha$. This completes the proof.\qed
\begin{exa}
For $s=3$, $L=108$, $k=109$ and $n=1138$, set
\[\alpha=(4^6,7^5,12^4,18^3,25^3,42^5,73^5,109^3).\]
Then $k=36*3+1$, so $r=36$ and $t=1$. Applying $\phi_4$ to $\alpha$, we derive that
\[\phi_4(\alpha)=(3^{102},4^6,7^8,12^4,18^3,25^3,42^5,73^5).\]
It is trivial to check that $\phi_4(\alpha)\in F^4_{108,3,109}(1138)$. Applying $\psi_4$ to $\phi_4(\alpha)$ we recover $\alpha$.
\end{exa}

\begin{lem}\label{lem-inj-5}
There is an explicit injection $\phi_5:C^5_{L,s,2}(n)\to F^5_{L,s,k}(n)$.
\end{lem}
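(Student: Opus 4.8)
The plan is to imitate the construction of $\phi_4$ in Lemma~\ref{lem-inj-4}, adjusted to the facts that a partition in $C^5_{L,s,2}(n)$ has exactly one part equal to $k$ and that the target class $F^5_{L,s,k}(n)$ prescribes the exact multiplicity $g_s=r-4$ together with $g_{2s}\ge 1$. Write $k=rs+t$ with $0\le t\le s-1$; recall $r\ge 2s^2+5s\ge 7$, so $r-4\ge 3$. The arithmetic heart of the map is the identity
\[
k=rs+t=(r-4)\,s+2s+(2s+t),
\]
which expresses $k$ as $r-4$ copies of $s$, one copy of $2s$, and one copy of $2s+t$; when $t=0$ the last two summands coincide and this must be read as $r-4$ copies of $s$ and two copies of $2s$. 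Using $L\ge 2s^3+5s^2+1$ one checks $s+1\le 2s\le 2s+t\le s+L$, and using $r\ge 7$ that $2s\ne k$ and $2s+t\ne k$.

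Given $\alpha=((s+1)^{f_{s+1}}\cdots(s+L)^{f_{s+L}})\in C^5_{L,s,2}(n)$, so $f_k=1$, let $\phi_5(\alpha)$ be the partition obtained from $\alpha$ by deleting the unique part equal to $k$ and inserting $r-4$ parts equal to $s$, one part equal to $2s$, and one part equal to $2s+t$ (i.e.\ two parts equal to $2s$ if $t=0$). By the displayed identity $|\phi_5(\alpha)|=n$; since $\alpha$ has no part equal to $s$ we get $g_s=r-4\ge 1$, so the smallest part is $s$; we have $g_{2s}=f_{2s}+1\ge 1$ (or $f_{2s}+2$ if $t=0$); the largest part is still at most $s+L$; and $g_k=0$ because the single $k$ was removed and none of $s,2s,2s+t$ equals $k$. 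Hence $\phi_5(\alpha)\in F^5_{L,s,k}(n)$.

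For injectivity, exhibit a left inverse $\psi_5$ on the image $I^5_{L,s,k}(n)=\{\phi_5(\alpha)\colon\alpha\in C^5_{L,s,2}(n)\}$: for $\beta\in I^5_{L,s,k}(n)$, delete $r-4$ parts equal to $s$, one part equal to $2s$ and one part equal to $2s+t$ (two parts equal to $2s$ if $t=0$), and reinsert one part equal to $k$. This is well defined on $I^5_{L,s,k}(n)$ because such $\beta$ has $g_s=r-4$ exactly and, by the construction of $\phi_5$, satisfies $g_{2s}\ge 1$ and $g_{2s+t}\ge 1$ (with $g_{2s}\ge 2$ when $t=0$); one then checks directly that $\psi_5(\phi_5(\alpha))=\alpha$ and that $\psi_5(\beta)\in C^5_{L,s,2}(n)$ (the reinserted part satisfies $s+1\le L\le k\le s+L$ and $\beta$ has no part equal to $k$, so $\psi_5(\beta)$ has exactly one part equal to $k$). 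Apart from the degenerate case $t=0$ --- where ``one copy of $2s$ plus one copy of $2s+t$'' is to be interpreted as two copies of $2s$ --- this is the same bookkeeping as in Lemmas~\ref{lem-inj-1}--\ref{lem-inj-4}. The only mild obstacle is that the constraint $g_s=r-4$ must hold with equality, which is precisely why the surplus $2s$ has to be absorbed into a part $\ne s$ rather than left as two further copies of $s$.
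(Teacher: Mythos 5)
Your construction is exactly the paper's: both use the identity $k=(r-4)s+2s+(2s+t)$ to replace the unique part $k$ by $r-4$ copies of $s$, one $2s$ and one $2s+t$ (two copies of $2s$ when $t=0$), and both invert by the same deletion-and-reinsertion of $k$ on the image set. The proposal is correct and takes essentially the same approach as the paper.
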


\pf Given $\alpha=((s+1)^{f_{s+1}}\ldots (s+L)^{f_{s+L}})\in C^5_{L,s,2}(n)$, by definition we see $f_k=1$. Recall that $k=rs+t$, where $0\leq t\leq s-1$ and $r\geq 2s^2+5s\geq 7$. When $t\neq 0$,  define $\phi_5(\alpha)$ as follows.
\begin{equation}\label{equ-phi5-1}
\phi_5(\alpha)=(s^{g_s}\ldots (s+L)^{g_{s+L}})
=(s^{r-4}\ldots (2s)^{f_{2s}+1}\ldots (2s+t)^{f_{2s+t}+1}\ldots k^0 \ldots (s+L)^{f_{s+L}}).
\end{equation}
And when $t=0$, we set $\phi_5(\alpha)$ as given below.
\begin{equation}\label{equ-phi5-2}
\phi_5(\alpha)=(s^{g_s}\ldots (s+L)^{g_{s+L}})
=(s^{r-4}\ldots (2s)^{f_{2s}+2}\ldots k^0 \ldots (s+L)^{f_{s+L}}).
\end{equation}
In either case, we see that $g_s=r-4$ and $g_{2s}\geq 1$. Moreover, it is trivial to check that $|\phi_5(\alpha)|=n$. This yields $\phi_5(\alpha)\in F^5_{L,s,k}(n)$.

It remains to show that $\phi_5$ is an injection. Let
$$I^5_{L,s,k}(n)=\{\phi_5(\alpha)\colon \alpha\in C^5_{L,s,2}(n)\}$$
 be the image set of $\phi_5$, which has been shown to be a subset of $F^5_{L,s,k}(n)$. We wish to construct a map $\psi_5\colon I^5_{L,s,k}(n)\rightarrow C^5_{L,s,2}(n)$ such that for any $\alpha\in C^5_{L,s,2}(n)$,
 \[\psi_5(\phi_5(\alpha))=\alpha.\]
 Let $\beta=(s^{g_s}\ldots (s+L)^{g_{s+L}})\in I^5_{L,s,k}(n)$, that is, there exists $\alpha\in C^5_{L,s,2}(n)$ such that $\phi_5(\alpha)=\beta$. When $t\neq 0$, from \eqref{equ-phi5-1} we see $g_s=r-4$, $g_{2s}\ge 1$ and $g_{2s+t}\geq 1$. We may define $\psi_5$ as follows.
 \[\psi_5(\beta)=((s+1)^{g_{s+1}}\ldots (2s)^{g_{2s}-1}\ldots (2s+t)^{g_{2s+t}-1}\ldots k^{1} \ldots(s+L)^{g_{s+L}}).\]
 When $t=0$, from \eqref{equ-phi5-2}, we have $g_s=r-4$ and $g_{2s}\ge 2$. Hence the map $\psi_5$ is defined as follows.
  \[\psi_5(\beta)=((s+1)^{g_{s+1}}\ldots (2s)^{g_{2s}-2}\ldots  k^{1} \ldots(s+L)^{g_{s+L}}).\]
It is easy to check that in either case $\psi_5(\beta)\in C^5_{L,s,2}(n)$ and $\psi_5(\phi_5(\alpha))=\alpha$. This completes the proof.\qed

\begin{exa}
For $s=3$, $L=103$, $k=105$ and $n=1217$, set
\[\alpha=(6^2,9^5,12^8,17^4,35^6,42^5,73^5,105^1,106^1).\]
Then $k=35*3$, so $r=35$ and $t=0$. Applying $\phi_5$ to $\alpha$, we derive that
\[\phi_5(\alpha)=(3^{31},6^4,9^5,12^8,17^4,35^6,42^5,73^5,106^1).\]
It is trivial to check that $\phi_5(\alpha)\in F^5_{103,3,105}(1217)$. Applying $\psi_5$ to $\phi_5(\alpha)$ we recover $\alpha$.
\end{exa}

We are now in a position to prove Theorem \ref{thm-main-inf}.

{\noindent \it Proof of Theorem \ref{thm-main-inf}.} Given integer $s\geq 1$, $L\geq 2s^3+5s^2+1$, $s+L\ge k\ge L$ and $n\ge 2s^5+8s^4+s^3-14s^2+3s+1$,
for any $\alpha\in C_{L,s,2}(n)$,  we define
\begin{equation}
\phi(\alpha)=\phi_i(\alpha)\;\; \text{if }\;\; \alpha\in C^i_{L,s,2}(n)\;\; \text{for}\;\; i=1,\ldots, 5.
\end{equation}
From Lemmas~\ref{lem-inj-1}-\ref{lem-inj-5},
 we deduce that $\phi(\alpha)\in F_{L,s,k}(n)$ and $\phi$ is an injection. This completes the proof.\qed

We show that   Theorem \ref{thm-main} is  a consequence of
Theorems~\ref{thm-asy} and \ref{thm-main-inf}.

{\noindent \it Proof of Theorem \ref{thm-main}.} From Theorem \ref{thm-asy}, for any positive integer $s$ and $L$, there exists an integer  $M_{L,s}$ such that for $n\geq M_{L,s}$,
\begin{equation}\label{eq-f-s-k-l}
f_{L,s,k}(n)\geq c_{L,s,2}(n).
\end{equation}
Moreover, by Theorem \ref{thm-main-inf}, for $L\geq 2s^3+5s^2+1$ and $n\geq 2s^5+8s^4+s^3-14s^2+3s+1$, \eqref{eq-f-s-k-l} also holds. Hence, if  we set
\[M=\max\{M_{3,s},M_{4,s},\ldots,M_{2s^3+5s^2,s},2s^5+8s^4+s^3-14s^2+3s+1\},\]
then $M$ only depends on $s$, and \eqref{eq-f-s-k-l} holds for all $n\geq M$.\qed

\section{Proof of Theorem \ref{thm-main0-2}}

Define the sequences $(a_n)$, $(b_n)$ and  $(c_n)$ by
\begin{align}
\sum_{n=0}^\infty a_nq^n&:=\frac{1-q}{(q^s;q)_{L+1}},\label{equ-def-an}\\
\sum_{n=0}^\infty b_nq^n&:=\frac{1}{(1-q^s)(q^{s+2};q)_{L-1}},\label{equ-def-bn}\\
\sum_{n=0}^\infty c_nq^n&:=\frac{1}{(q^{s+1};q)_L}.\label{equ-def-cn}
\end{align}
We have the following result, which will play a curial role in the proof of Theorem \ref{thm-main0-2}.

\begin{lem}\label{lem-1-asy}
For any $s\geq 1$ and $L\geq 3$,
\begin{equation}
a_n\sim \frac{n^{L-1}}{(L-1)!s(s+1)\cdots (s+L)}.
\end{equation}
\end{lem}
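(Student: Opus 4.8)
The claim is an asymptotic estimate for the coefficients $a_n$ defined by $\sum a_n q^n = (1-q)/(q^s;q)_{L+1}$, i.e. for the generating function
\[
\frac{1-q}{(1-q^s)(1-q^{s+1})\cdots(1-q^{s+L})}.
\]
The plan is to reduce this to Theorem~\ref{schur-thm} (Frobenius--Schur). First I would cancel the factor $1-q$ against one of the denominator factors \emph{in an asymptotic sense} rather than algebraically: write $a_n = d_n - d_{n-1}$, where $\sum d_n q^n = 1/(q^s;q)_{L+1} = \sum_{n} p_B(n) q^n$ with $B = \{s, s+1, \ldots, s+L\}$, a set of $L+1$ positive integers. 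Since $L\ge 3$, $B$ contains two consecutive integers, so $\gcd(B)=1$ and Theorem~\ref{schur-thm} applies:
\[
d_n = p_B(n) \sim \frac{n^{L}}{L!\, s(s+1)\cdots(s+L)}.
\]

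Next I would show that taking the first difference of a sequence with this polynomial-type asymptotics lowers the degree by one and multiplies the leading constant by the exponent. Concretely, writing $P = L!\, s(s+1)\cdots(s+L)$, from $d_n \sim n^L/P$ one gets $a_n = d_n - d_{n-1} = \bigl(n^L - (n-1)^L\bigr)/P + o(n^L/P) $; since $n^L-(n-1)^L = L n^{L-1} + O(n^{L-2})$, this would give $a_n \sim L n^{L-1}/P = n^{L-1}/\bigl((L-1)!\, s(s+1)\cdots(s+L)\bigr)$, which is exactly the asserted formula. The one subtlety is that "$d_n\sim n^L/P$" only controls $d_n$ up to a multiplicative $(1+o(1))$ factor, and subtracting two such quantities could in principle lose the main term. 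To handle this cleanly I would invoke the known finer structure of $p_B(n)$: by the standard partial-fractions analysis of $1/(q^s;q)_{L+1}$ (as in \cite[Ch.~3]{And-1998} or \cite[\S3.15]{Wilf-1994}), $p_B(n)$ agrees, for $n$ in each fixed residue class modulo $\mathrm{lcm}(B)$, with a polynomial in $n$ of degree $L$ whose leading coefficient is the residue-class-independent constant $1/P$. Hence $d_n = n^L/P + (\text{lower-order terms that are eventually-periodic-polynomial of degree} \le L-1)$, and the first difference $a_n$ is then, on each residue class, a polynomial of degree $L-1$ with leading coefficient $L/P$, uniformly in the class; this yields $a_n \sim L n^{L-1}/P$ with no loss.

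Alternatively, and perhaps more in the spirit of the paper, I would avoid the partial-fraction machinery entirely by observing that $a_n$ is itself a coincidence of a clean combinatorial count: from $1-q$ in the numerator there is no immediate cancellation since $1-q$ is not a denominator factor, but one can instead write $\frac{1-q}{(q^s;q)_{L+1}} = \frac{1}{(q^s;q)_{L+1}} - \frac{q}{(q^s;q)_{L+1}}$ and recognize $a_n = p_B(n) - p_B(n-1)$ directly, then apply the difference estimate above. Either way, the only real obstacle is making the subtraction $d_n - d_{n-1}$ legitimate, i.e. knowing the error term in Frobenius--Schur is genuinely $O(n^{L-1})$ (not merely $o(n^L)$) with a coefficient that does not interfere with the leading term of the difference; I expect to dispatch this by citing the quasi-polynomial refinement of Theorem~\ref{schur-thm} rather than reproving it. With that in hand the computation is a one-line binomial expansion, so this lemma should be short.
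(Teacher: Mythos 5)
Your reduction $a_n = p_B(n) - p_B(n-1)$ with $B=\{s,s+1,\dots,s+L\}$ is a genuinely different route from the paper's, and you correctly flag that the dangerous step is subtracting two quantities with the \emph{same} leading asymptotics $n^{L}/P$. But the fix you propose does not close the gap. Knowing that $p_B(n)$ agrees on each residue class mod $\operatorname{lcm}(B)$ with a degree-$L$ polynomial whose \emph{leading} coefficient $1/P$ is class-independent is not enough to conclude that the first difference has leading term $Ln^{L-1}/P$: since $n$ and $n-1$ lie in different residue classes, the class-dependent coefficients of $n^{L-1}$ can contribute at exactly the order $n^{L-1}$ you are trying to isolate. (Abstractly, $d_n=n^2+(-1)^n n$ has constant leading coefficient $1$, yet $d_n-d_{n-1}=(2n-1)\bigl(1+(-1)^n\bigr)$ is not $\sim 2n$.) To make your argument work you would additionally have to show that the coefficient of $n^{L-1}$ in the quasi-polynomial is itself class-independent; this happens to be true here because $B$ consists of $L+1$ consecutive integers, so every pole of $1/(q^s;q)_{L+1}$ at a root of unity $\zeta\neq 1$ has order at most $\lceil (L+1)/2\rceil\le L-1$ and hence contributes only degree $\le L-2$ --- but that is an extra argument you neither state nor cite, and it is the entire content of the step.

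The paper sidesteps the issue with the algebraic identity $1-q=(1-q^{s+1})-q(1-q^s)$, which gives $\frac{1-q}{(q^s;q)_{L+1}}=\frac{1}{(1-q^s)(q^{s+2};q)_{L-1}}-\frac{q}{(q^{s+1};q)_{L}}$, i.e.\ $a_n=b_n-c_{n-1}$ where $b_n$ and $c_n$ count partitions into parts from the two $L$-element sets $\{s,s+2,\dots,s+L\}$ and $\{s+1,\dots,s+L\}$. Theorem~\ref{schur-thm} then applies directly at degree $L-1$, and the two leading constants, $(s+1)/P'$ and $s/P'$ with $P'=(L-1)!\,s(s+1)\cdots(s+L)$, are \emph{different}, so the subtraction loses nothing. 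You should either adopt this splitting or supply the missing control on the subleading coefficient of the quasi-polynomial.
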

\pf Writing $1-q=(1-q^{s+1})-(q-q^{s+1})$ we see that
\begin{align}\label{equ-sum-an}
\frac{1-q}{(q^s;q)_{L+1}}
=\frac{1}{(1-q^s)(q^{s+2};q)_{L-1}}-\frac{q}{(q^{s+1};q)_L}.
\end{align}
It follows from \eqref{equ-def-an}-\eqref{equ-def-cn} that
\begin{equation}\label{equ-sum-an}
a_n=b_n-c_{n-1}\quad \textrm{for} \quad n\geq 1.
\end{equation}
 For $L\geq 3$, as $\gcd(s,s+2,\ldots,s+L)=1$ and $\gcd(s+1,\ldots,s+L)=1$,
by Theorem \ref{schur-thm}, we have
\begin{equation}\label{equ-sim-bn}
b_n\sim \frac{(s+1)n^{L-1}}{(L-1)!s(s+1)\cdots (s+L)},
\end{equation}
and
\begin{equation}\label{equ-sim-cn}
c_n\sim \frac{sn^{L-1}}{(L-1)!s(s+1)\cdots (s+L)}.
\end{equation}
Substituting
\eqref{equ-sim-bn} and \eqref{equ-sim-cn} into \eqref{equ-sum-an}, we deduce that
\begin{equation}
a_n=b_n-c_{n-1}\sim \frac{n^{L-1}}{(L-1)!s(s+1)\cdots (s+L)}.
\end{equation}
This completes the proof.\qed

We proceed to show Theorem \ref{thm-main0-2} with the aid of Lemma \ref{lem-1-asy}.

{\noindent \it Proof of Theorem \ref{thm-main0-2}.}
For fixed integer $k$, define
\begin{align}\label{def:d}
\sum_{n=0}^\infty d_{k}(n)q^n=\frac{1-q^{k}}{(q^s;q)_{L+1}}.
\end{align}
Comparing with \eqref{equ-def-an}  we have
\begin{equation}
\sum_{n=0}^\infty d_k(n)q^n
=(1+q+\cdots+q^{k-1})\sum_{n=0}^\infty a_nq^n.
\end{equation}
Thus,  for $n\geq k$,  Lemma \ref{lem-1-asy} implies that
\begin{align}
d_k(n)&=a_n+a_{n-1}+\cdots+a_{n-k+1}\nonumber\\
&\sim \frac{n^{L-1}+\cdots+(n-k+1)^{L-1}}{(L-1)!s(s+1)\cdots (s+L)}\nonumber\\
&\sim \frac{kn^{L-1}}{(L-1)!s(s+1)\cdots (s+L)}.\label{equ-dkn-sim-in}
\end{align}
For fixed integer $r\geq 0$, set
\begin{align}\label{def:e}
\sum_{n=0}^\infty e_{k,r}(n)q^n:=\frac{q^r(1-q^{k})}{(q^s;q)_{L+1}}.
\end{align}
Then, for $n\ge r$,
\begin{equation}\label{equ-temp-e-d}
e_{k,r}(n)=d_k(n-r).
\end{equation}
By \eqref{equ-dkn-sim-in}, we deduce that
\begin{equation}\label{equ-dkn-sim-in-en}
e_{k,r}(n)\sim \frac{kn^{L-1}}{(L-1)!s(s+1)\cdots (s+L)}.
\end{equation}
Therefore, it follows from \eqref{def:H}, \eqref{def:d} and \eqref{def:e} that
\begin{equation}
H^*_{L,s,r,k_1,k_2}(q)=\sum_{n=0}^\infty \left(e_{k_1,r}(n)-d_{k_2}(n)\right)q^n.
\end{equation}
From \eqref{equ-dkn-sim-in} and \eqref{equ-dkn-sim-in-en}, we derive that
\[e_{k_1,r}(n)-d_{k_2}(n)\sim \frac{(k_1-k_2)n^{L-1}}{(L-1)!s(s+1)\cdots (s+L)}.\]
Since $k_1>k_2$, there exists $M$ such that
$e_{k_1,r}(n)-d_{k_2}(n)>0$ for $n>M$.
Thus  $H^*_{L,s,r,k_1,k_2}(q)$ is eventually positive. This completes the proof.\qed

\begin{rem} For some special cases it would be interesting to determine the smallest $M$ in
Theorem~\ref{thm-main}, see also Conjecture 5.3 in \cite{Ber-Unc-19}.
\end{rem}

\noindent{\bf Acknowledgments.}
This work was done during the second author's visit to the Harbin Institute of Technology (HIT)
in the summer of 2019.
The first author was supported by the National
Natural Science Foundation of China (No. 11801119).
 The second author would like to thank
 Institute for  Advanced Study in Mathematics of HIT for the hospitality.


\begin{thebibliography}{99} \small

\setlength{\itemsep}{-.8mm}


\bibitem{And-1998}
G. E.  Andrews,
The theory of partitions, Reprint of the 1976 original.  Cambridge University Press, Cambridge, 1998.


\bibitem{Be-Ge-Ko-01} M. Beck,  Ira M. Gessel,  T. Komatsu,
The polynomial part of a restricted partition function related to the Frobenius problem,
Electron. J. Combin. 8 (2001), no. 1, Note 7, 5 pp

\bibitem{Alf-2000}J. L. Ramirez Alfonsin, The diophantine Frobenius problem, Report No. 00893,
Forschungsinstitut f$\ddot{\text{u}}$r diskrete Mathematik, Universit$\ddot{\text{a}}$t Bonn (2000)

\bibitem{Ber-Unc-19} A. Berkovich and A.K. Uncu, Some elementary partition inequalities
and their implications, Ann. Comb. 23 (2019) 263--284.

\bibitem{Sel-1977}E. S. Selmer, On the linear diophantine problem of Frobenius, J. reine angew. Math.
293/294 (1977), 1--17

\bibitem{Wilf-1994}H. S. Wilf, Generatingfunctionology, 2nd ed. Academic Press,
London, 1994




\end{thebibliography}
\end{document}